\newtheorem{thm}{Theorem}
\numberwithin{thm}{section}
\newtheorem{theorem}[thm]{Theorem}
\newtheorem{proposition}[thm]{Proposition}
\newtheorem{lemma}[thm]{Lemma}
\theoremstyle{definition}
\newtheorem{definition}[thm]{Definition}
\theoremstyle{remark}
\newtheorem{remark}[thm]{Remark}
\newtheorem{conjecture}[thm]{Conjecture}
\newtheorem{example}[thm]{Example}
\newcommand{\bC}{\mathbb{C}}
\newcommand{\bN}{\mathbb{N}}
\newcommand{\bZ}{\mathbb{Z}}
\newcommand{\Qsym}{\ensuremath{\operatorname{QSym}}}
\newcommand{\suchthat}{\;|\;}
\newcommand{\asc}{\mathrm{asc}} 
\newcommand{\dsc}{\mathrm{des}} 
\newcommand{\mc}[1]{\mathcal{#1}} 
\newcommand{\sym}{\mathrm{Sym}} 
 \newcommand{\qint}[1]{(#1)_q} 
\newcommand{\qfact}[1]{(#1)_q!} 
\newcommand{\aseq}[1]{\mathrm{a}(#1)} 
\newcommand{\kly}{\mc{K}}
\newcommand{\acy}{\mathrm{Acy}}
\newcommand{\pathalg}{\mc{P}} 
\newcommand{\sfn}{\mathsf{n}} 
\newcommand{\sfe}{\mathsf{e}} 
\newcommand{\msf}[1]{\mathsf{#1}} 
\newcommand{\staircase}{\mathscr{S}} 
\newcommand{\zigzag}{\mathscr{Z}} 
\newcommand{\rectangular}{\mathscr{R}} 
\newcommand{\rect}{\square} 
\newcommand{\sourceseq}{\mathrm{ss}} 
\newcommand{\initial}{\mathrm{in}} 
\newcommand{\litspace}{\,}
\title[DU algebras and chromatic symmetric functions]{Down-up algebras and chromatic symmetric functions}
\author{Philippe Nadeau}
\address{Univ Lyon, Universit\'e Claude Bernard Lyon 1, CNRS UMR
5208, Institut Camille Jordan, 43 blvd. du 11 novembre 1918, F-69622 Villeurbanne cedex, France}
\email{\href{mailto:nadeau@math.univ-lyon1.fr}{nadeau@math.univ-lyon1.fr}}
\author{Vasu Tewari}
\address{Department of Mathematics, University of Hawaii at Manoa, Honolulu, HI 96822, USA}
\email{\href{mailto:vvtewari@math.hawaii.edu}{vvtewari@math.hawaii.edu}}
\thanks{P.~N is partially supported by the project ANR19-CE48-011-01 (COMBIN\'E). V.~T. acknowledges the support from Simons Collaboration Grant \#855592.}
\begin{document}

\begin{abstract}
We establish Guay-Paquet's unpublished linear relation between certain chromatic symmetric functions by relating his algebra on paths to the $q$-Klyachko algebra. 
The coefficients in this relation are $q$-hit polynomials, and they come up naturally in our setup as connected remixed Eulerian numbers, in contrast to the computational approach of Colmenarejo--Morales--Panova. 
As Guay-Paquet's algebra is a down-up algebra, we are able to harness algebraic results in the context of the latter and establish results of a combinatorial flavour. In particular we resolve a conjecture of Colmenarejo--Morales--Panova on chromatic symmetric functions. This concerns the abelian case of the Stanley--Stembridge conjecture, which we briefly survey.
\end{abstract}

\maketitle

\section{Introduction}
\label{sec:intro}

Let $G=(V,E)$ be a finite undirected graph with $V=[n]\coloneqq \{1,\dots,n\}$.
The chromatic quasisymmetric function $X_G$ introduced by Shareshian--Wachs~\cite{ShWa16} is a generalization of Stanley's chromatic symmetric function~\cite{St95}, which in turn is a generalization of Birkhoff's chromatic polynomial. Given the remarkable circle of ideas relating these functions to the cohomology of Hessenberg varieties~\cite[Section 10]{ShWa16} and the Stanley--Stembridge conjecture~\cite{StanleyStembridge}, these functions have garnered substantial attention in the last decade; see for instance~\cite{AbreuNigro, Ale,Alepan,Ath15,BC18,CMP,GP-hopf,HaradaPrecup,HuhNamYoo}. 

The Stanley--Stembridge conjecture states that $X_G$ is \emph{$e$-positive} when $G$ is the incomparability graph of a naturally-labeled unit interval order.
Such $G$ can be interpreted as Dyck paths $D$ and we refer to them as Dyck graphs, writing $X_D$ in place of $X_G$ when there is no scope for confusion.
While the aforementioned conjecture is still wide open, there are known partial cases, most notably the \emph{abelian case}~\cite{AbreuNigro,CMP,HaradaPrecup}.

Numerous lines of attack to this conjecture involve the \emph{modular law}~\cite{GP-modular, OS14}. This 
is a simple linear relation between certain $X_G$ which itself has been a subject of much investigation; see~\cite{PS22} for a deep geometric perspective.
Motivated by this law, Guay-Paquet~\cite{GP-notes} in unpublished work introduced the algebra $\pathalg$ as the \emph{noncommutative} algebra over $\bC(q)$ generated by $\sfn$ and $\sfe$ with the \emph{modular} relations:
\begin{align}
\label{eq:mod_relations_path_algebra}
(1+q)\msf{ene}&=q\msf{een}+\msf{nee}\\
\label{eq:mod_relations_path_algebra_1}
(1+q) \msf{nen}&=q\msf{enn}+\msf{nne}.
\end{align}
As we will see below, this algebra is in fact known as a down-up algebra. Working in $\pathalg$, Guay-Paquet~\cite[Theorem 1]{GP-notes} established a particularly elegant result which we now state. For undefined jargon in this context, we refer the reader to Sections~\ref{sec:path_alg_kly} and \ref{sec:abelian}.

\begin{theorem}[Guay-Paquet]
\label{th:intro_1}
Let $D=UVW$ be a Dyck path where $V$ is an abelian subpath with $m$ north steps \textup{(}denoted by $\msf{n}$\textup{)} and $n$ east steps \textup{(}denoted by $\msf{e}$ \textup{)}, with $m\geq  n$. In particular, $V$ may be identified with a partition $\lambda$ in an $m\times n$ box. Then 
\[
X_{UVW}=\sum_{0\leq k\leq n}\frac{H_k^{m,n}(\lambda)}{\qint{m}\qint{m-1}\cdots \qint{m-n+1}}\litspace X_{U\msf{e}^k\msf{n}^m\msf{e}^{n-k}W}.
\]
Here $H_k^{m,n}(\lambda)$ denotes the \emph{$q$-hit number} of Garsia--Remmel~\cite{GarsiaRemmel}, and $\qint{j}\coloneqq 1+q+\cdots+q^{j-1}$ is the $q$-analogue of $j$ for $j\geq 0$.
\end{theorem}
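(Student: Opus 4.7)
The plan is to prove the identity inside Guay-Paquet's algebra $\pathalg$ and then deduce the theorem by specialization to chromatic (quasi)symmetric functions. Since the relations~\eqref{eq:mod_relations_path_algebra} and~\eqref{eq:mod_relations_path_algebra_1} defining $\pathalg$ are exactly the $q$-modular relations satisfied by $X_D$ on consecutive triples of steps, the assignment $D \mapsto X_D$ factors through $\pathalg$. It therefore suffices to establish
\[
V = \sum_{0 \leq k \leq n} \frac{H_k^{m,n}(\lambda)}{\qint{m} \qint{m-1} \cdots \qint{m-n+1}} \msf{e}^k \msf{n}^m \msf{e}^{n-k}
\]
as an identity in $\pathalg$, where $V = V(\lambda)$ is the abelian word in an $m \times n$ rectangle determined by $\lambda$. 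This first move is important because it strips away the boundary data $U, W$ and reduces the theorem to a purely algebraic expansion in a single finite-dimensional graded piece of $\pathalg$.

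The next step is to invoke the link between $\pathalg$ and the $q$-Klyachko algebra $\kly$ developed in Section~\ref{sec:path_alg_kly}. Under this dictionary, the subspace of $\pathalg$ spanned by words with $m$ copies of $\msf{n}$ and $n$ copies of $\msf{e}$ should correspond to a well-understood subspace of $\kly$, in which the extremal words $\msf{e}^k \msf{n}^m \msf{e}^{n-k}$ (for $0 \leq k \leq n$) form a natural basis. Expanding $V(\lambda)$ in this basis produces coefficients that, as the abstract emphasizes, appear \emph{naturally} as connected remixed Eulerian numbers associated with the partition $\lambda$ and index $k$, rather than requiring the more computational approach of Colmenarejo--Morales--Panova.

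The main step, and in my view the principal obstacle, is then identifying these connected remixed Eulerian numbers with the normalized Garsia--Remmel $q$-hit numbers $H_k^{m,n}(\lambda)/(\qint{m}\qint{m-1}\cdots\qint{m-n+1})$. The $q$-hit polynomials live in the combinatorial world of non-attacking rook placements on the Ferrers board of $\lambda$, whereas the remixed Eulerian numbers arise algebraically from structure constants in $\kly$. The denominator $\qint{m}\qint{m-1}\cdots\qint{m-n+1}$, a truncated $q$-factorial, is a suggestive hint since analogous products appear in rook-theoretic normalizations. I would carry out the identification either by extracting a recursion matching Garsia--Remmel's recursion for $H_k^{m,n}$ as $\lambda$ is built up box by box, or by a bijective argument translating straightening sequences in $\pathalg$ into rook placements on the board of $\lambda$. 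Once the two families of numbers are shown to coincide, the theorem follows by applying the algebra-to-$X$ evaluation $D \mapsto X_D$ to both sides of the $\pathalg$-identity.
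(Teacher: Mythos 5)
Your overall route is the paper's: reduce to an identity in $\pathalg_{m,n}$ via the modular law (this is exactly Section~\ref{sub:guay-paquet-rectangular}), transport the problem into the $q$-Klyachko algebra, and identify the resulting coefficients with normalized $q$-hit numbers. The problem is that your middle step, described only as ``invoking the dictionary,'' is where essentially all the work lies, and as written it is a gap. The paper's dictionary is the explicit map $\psi$ sending $w(\lambda)$ to $u(\lambda)=\prod_i u_{a_i}$, where $a(\lambda)=(a_1,\dots,a_m)$ with $a_i=i-\lambda_{m+1-i}$ is the area sequence. Well-definedness of $\psi$ is not automatic: a modular move on $w$ changes a single $a_i$ to $a_i\pm1$, and to realize this as an instance of the Klyachko relation $(1+q)u_{a_i}^2=qu_{a_i}u_{a_i-1}+u_{a_i}u_{a_i+1}$ one must first produce a second index $j\neq i$ with $a_j=a_i$; the paper proves such a $j$ exists using that the increments of $a(\lambda)$ are bounded above by $1$ and that $a_m\geq 0$, with a reversal argument for the case $a_i\geq 1$. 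Without specifying the map and checking this, your claims that the rectangular words form a basis of $\pathalg_{m,n}$ and that the coefficients are connected remixed Eulerian numbers have no support; the paper gets the basis claim precisely because the images $\psi(\rect_i)=u_{[1,m]\downarrow i}$ lie in the squarefree basis $\mc{B}$ of $\kly$ and $\dim\pathalg_{m,m}=m+1$ by the PBW theorem for down-up algebras.

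On your final step: you correctly flag the identification of the $\kly$-structure constants with $q$-hit numbers as the remaining issue, but neither of your two proposed strategies (a Garsia--Remmel-type recursion, or a bijection on straightening sequences) is carried out, and neither is what the paper does. The paper compares the generating functions \eqref{eq:gf_connected} and \eqref{eq:qhit_gf}, whose left-hand sides coincide once $\alpha$ is taken to be the content of the area sequence, forcing the numerators to agree term by term; this is \eqref{eq:c_k_equals_q_hit}, imported from~\cite{NTRemixed}. Finally, your proposal does not address the non-square case $m>n$: the paper appends $\msf{e}^{m-n}$ to reduce to the square case, and it is the resulting restriction on the support of the area sequence that produces the truncated $q$-factorial $\qint{m}\qint{m-1}\cdots\qint{m-n+1}$ in the denominator rather than $(m)_q!$.
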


Informally put, abelian subpaths of Dyck paths may be replaced by special rectangular paths along with coefficients given by $q$-hit numbers.
Hence it suffices to study $X_G$ of the sort that arise on the right-hand side, thereby restricting attention to a much smaller family of Dyck graphs.

Recently, Colmenarejo--Morales--Panova~\cite{CMP} gave an independent proof of Theorem~\ref{th:intro_1} relying heavily on intricate rook-theoretic identities.  Yet another proof was given independently by Lee and Soh~\cite{LeeSoh22}.

\subsection{Discussion of results}
Our primary aim is to give a short algebraic proof of Theorem~\ref{th:intro_1} by relating the algebra $\pathalg$ to the \emph{$q$-Klyachko algebra} $\kly$.
This \emph{commutative} algebra is generated by $(u_i)_{i\in \bZ}$ subject to quadratic relations in~\eqref{eq:relation_q_Klyachko}, and its name reflects the fact that these relations are a deformation of Klyachko's presentation~\cite{Kly85} for the $S_n$-invariant part of the cohomology ring of the permutahedral variety. 
As the authors demonstrated in~\cite{NT21}, $\kly$ has intimate links with various subareas within algebraic combinatorics. This link to chromatic symmetric functions furthers our case.
$\kly$ possesses a basis $\mc{B}$ of square-free monomials and the statement in Theorem~\ref{th:intro_1} is equivalent to the expansion of the monomial $u_{1}^{c_1}\cdots u_k^{c_k}$ where $c_i>0$ for $i=1,\dots,k$ in terms of $\mc{B}$. 
The resulting coefficients are connected remixed Eulerian numbers~\cite{NT21,NTRemixed}.

The previous links unearth other interesting properties of $\pathalg$.
We briefly describe them postponing explicit statements.
It is the case that $\pathalg$ is a down-up algebra introduced by Benkart--Roby~\cite{BenRob} (see also \cite[Definition 4.14]{NCSFIV} which implies that $\pathalg$ is the $n=2$ case of the \emph{quantum pseudoplactic algebra}). 
As such it possesses a so-called PBW basis of staircase monomials $\staircase$, which was independently noticed by Guay-Paquet~\cite{GP-notes}.

It is then natural to inquire about the expansion of any monomial $w$ in the basis $\staircase$.
Colmenarejo--Morales--Panova conjectured~\cite[Conjecture 6.6]{CMP} that the resulting coefficients, up to sign, are Laurent polynomials with nonnegative integer coefficients.
We resolve this conjecture by giving a simple combinatorial rule in Section~\ref{sec:arbitrary_into_staircase}.

Yet another basis comes up as follows: one can identify the diagonal subalgebra $\pathalg^{\mathrm{diag}}\coloneqq \oplus_{i\geq 0}\pathalg_{i,i}$ with the polynomial subalgebra in $\kly$ generated by $u_0$ and $u_1$, these two generators corresponding to the words $\msf{en}$ and $\msf{ne}$ in $\pathalg^{\mathrm{diag}}$. 
 The monomials in $\msf{en}$ and $\msf{ne}$ thus form a linear basis of $\pathalg^{\mathrm{diag}}$, which can be extended to a third basis for the space $\pathalg_{i,j}$. We refer to this as the \emph{zigzag} basis; see Section~\ref{sub:zigzag} for the precise description. We give an explicit description for the expansion of any word in the alphabet $\{\msf{n},\msf{e}\}$ in this basis; see Theorem~\ref{th:zigzag_coefficient}.

In Section~\ref{sub:guay-paquet-rectangular}, we recall how the modular law implies that relations in $\pathalg$ translate to relations amongst chromatic symmetric functions. This leads immediately to the proof of Theorem~\ref{th:intro_1}, which is directly related to the abelian case of the Stanley--Stembridge conjecture. In Section~\ref{sub:abelian_compendium} we revisit that case, and attempt an understanding of how the two new formulae\textemdash{} those of Abreu--Nigro, Harada--Precup \textemdash{} can be related bijectively to the original work of Stanley \cite{St95}. 

\section{Graded down-up algebra}
\label{sec:pathalg}

\subsection{Some generalities}
\label{subsec:generalities}

A \emph{path} $P$ is any word $w\coloneqq w(P)$ in the alphabet $\{\msf{n},\msf{e}\}$. Pictorially we depict it by reading $w$ left to right and translating every instance of $\msf{n}$ (respectively $\msf{e}$) as a unit north step (respectively east step) beginning at the origin. We denote the number of $\msf{n}$'s (resp. $\msf{e}$'s) by $|w|_{\msf{n}}$ (resp. $|w|_{\msf{e}}$). We let $\lambda\coloneqq \lambda(P)$ be the partition (in English notation) naturally determined by $P$ in the top left corner of the $|w|_{\msf{n}} \times |w|_{\msf{e}}$ box.
Alternatively, given any $\lambda\subset m\times n$, we may reverse this association to get a path $P\coloneqq P(\lambda)$ starting from $(0,0)$ to $(n,m)$, which in turn determines a word $w(\lambda)$ in $\{\msf{n},\msf{e}\}$.
Thus we have the following objects naturally in bijection:
\begin{align*}
\{\lambda\subseteq m\times n\} \leftrightarrow \{\text{paths $P$ from $(0,0)$ to $(n,m)$}\} \leftrightarrow \{w\in \{\msf{n},\msf{e}\}^{m+n}\text{ with } |w|_{\msf{n}}=m\}.
\end{align*}
Thus we can, and will, interchangeably use $P$, $w$, or $\lambda$ if it is clear from context.

\subsection[Basic properties]{Basic properties of $\pathalg$}
\label{subsec:basic_prop}
Recall that $\pathalg$ is the $\bC(q)$-algebra generated by $\msf{n}$ and $\msf{e}$ subject to the modular relations~\eqref{eq:mod_relations_path_algebra} and \eqref{eq:mod_relations_path_algebra_1}.
It turns out that these modular relations imply that $\pathalg$ is an instance of a well-studied class of algebras called \emph{down-up algebras}. These were introduced by Benkart--Roby~\cite[Section 2]{BenRob} inspired by Stanley's work on differential posets~\cite{St88}.
In the notation of \emph{loc. cit.}, $\pathalg$ is the down-up algebra $A(1+q,-q,0)$. At $q=1$, this recovers the Weyl algebra. While the algebraic properties of down-up algebras have been thoroughly studied, that it encodes the modular law has hitherto not been noted, to the best of our knowledge.

By the \emph{PBW theorem} for down-up algebras~\cite[Theorem 3.1]{BenRob}, the set 
\[
\staircase=\{\msf{e}^a(\msf{ne})^b\msf{n}^c\suchthat a,b,c\in \bZ_{\geq 0}\}
\]
is a basis for $\pathalg$. 
We refer to its elements as \emph{staircase} monomials, and to $\staircase$ as the \emph{staircase basis}. In Section~\ref{sec:arbitrary_into_staircase}, we explain how to expand an arbitrary element of $\pathalg$ in this basis.

Observe that the modular relations preserve the number of $\msf{n}$'s and $\msf{e}$'s. We can use this information to endow $\pathalg$ 
with a $\bZ_{\geq 0}\times \bZ_{\geq 0}$-grading:
\begin{align}
\pathalg=\bigoplus_{m,n\in \bZ_{\geq 0}} \pathalg_{m,n},
\end{align}
where $\pathalg_{m,n}$ is spanned by words $w$ satisfying $|w|_{\msf{n}}=m $ and $|w|_{\msf{e}}=n$.

A particular graded piece that is relevant for us is $\pathalg^{\mathrm{diag}}$ defined by
\begin{align}
\pathalg^{\mathrm{diag}}=\bigoplus_{m\in\bZ_{\geq 0}} \pathalg_{m,m}.
\end{align}

\noindent\emph{Involution $\eta$.} Benkart--Roby~\cite[p. 329]{BenRob} consider the map $\eta$ swapping $\msf{n}$ and $\msf{e}$, and extend it to an algebra \emph{anti}automorphism of the free associative algebra generated by $\msf{n}$ and $\msf{e}$. Since the modular relations are preserved under this antiautomorphism, we get an involution $\eta$ on $\pathalg$ which is combinatorially natural.
The notion of transposing a partition $\lambda \subseteq m\times n$ to obtain $\lambda^t$ corresponds to reversing $\lambda(w)$ and switching $\msf{n}$'s for $\msf{e}$'s, and vice versa. This resulting word is precisely $\eta(\lambda(w))$.  
The map $\eta$ sends $\pathalg_{m,n}$ to $\pathalg_{n,m}$.
If $\mathscr{B}_{m,n}$ is any basis for $\pathalg_{m,n}$, then applying $\eta$ to each basis element gives a basis for $\mathscr{P}_{n,m}$.
This will allow us to work under the assumption that $m\leq n$ (or $n\leq m$) whenever convenient.
Notice also that the staircase basis $\staircase$ is stable under $\eta$.

\section[Basis expansions]{Basis expansions in the algebra $\pathalg$}
\label{sec:path_alg_kly}

We consider expansions of elements of $\pathalg$ in three different bases. The first one is the rectangular basis considered by Guay-Paquet~\cite{GP-notes} for which our main result is Theorem~\ref{th:GP}. Its proof makes use of the $q$-Klyachko algebra introduced by the authors~\cite{NT21,NTRemixed}. In Section~\ref{sec:abelian} we will obtain Theorem~\ref{th:intro_1} as a corollary.

We give two other expansions: first, in the staircase basis $\staircase$, thus proving a conjecture of Colmenarejo, Morales and Panova \cite{CMP}, and then in what we call the zigzag basis.

\subsection[Rectangular]{Expansion into the rectangular basis}
\label{sub:rectangular}

Given nonnegative integers $m\geq n$, define the set of \emph{rectangular monomials} as follows:
\begin{align}
\rectangular_{m,n}&=\{\msf{e}^k\msf{n}^{m}\msf{e}^{n-k}\suchthat 0\leq k\leq n\}.
\end{align}
For $m<n$, we obtain $\rectangular_{m,n}$ using $\eta$. 

Our aim in this section is to expand any word $w$ in terms of monomials in $\rectangular_{m,n}$. It is not clear that this can be done, but will become transparent in due course.
We first explain how $q$-hit polynomials show up in another context.

\subsubsection[Klyachko algebra]{The $q$-Klyachko algebra}
\label{subsec:qkly}

We give a brisk introduction to the $q$-Klyachko algebra covering the bare essentials and refer the reader to~\cite{NT21} for more details.
The \emph{$q$-Klyachko algebra} $\kly$ is the commutative, graded $\bC(q)$-algebra with generators $(u_i)_{i\in\bZ}$ and quadratic relations
\begin{align}
\label{eq:relation_q_Klyachko}
(1+q)u_i^2=qu_iu_{i-1}+u_iu_{i+1}
\end{align}
for all $i\in\bZ$. As the authors demonstrated in~\cite{NT21}, $\kly$ has intimate links with various subareas within algebraic combinatorics. 
The link to chromatic (quasi)symmetric functions in this article adds to these various connections.

If $c=(c_i)_{i\in\bZ}$ is a sequence of nonnegative integers with finite support,\footnote{The support of $c$ is the set of indices $i$ such that $c_i>0$.} let $u^c\coloneqq \prod_{i\in\bZ} u_i^{c_i}$. 
In the particular case where the entries of $c$ are $0$s or $1$s, we may identify $c$ with its support $I\subset \bZ$, and then let $u_I\coloneqq \prod_{i\in I}u_i$. We let $\mc{B}$ denote the entire collection of such squarefree monomials $u_I$.
By~\cite[Proposition 3.9]{NT21}, $\mc{B}$ is a basis for $\kly$.
We may thus decompose
\[u^c=\sum_Ip_c(I)u_I.\]
Let $m=|c|\coloneqq \sum_{i}c_i$. By homogeneity $p_c(I)=0$ unless $|I|=m$. 
We define \[
A_c(q)=(m)_q! \times p_c(\{1,\dots,m\}).
\]
It is zero if the support of $c$ is not contained in $\{1,\dots,m\}$; so we can consider $c=(c_1,\dots,c_m)$, and in this case $A_c(q)$ is a nonzero polynomial with nonnegative integer coefficients. 

These polynomials were introduced by the authors~\cite[Section 4.3]{NT21} under the name \emph{remixed Eulerian numbers}. 
Indeed they recover Postnikov's mixed Eulerian numbers~\cite[Section 16]{Pos09} at $q=1$; see~\cite{NTRemixed} for a deeper combinatorial study of these polynomials. 
We will only need them for some special $c$, as we describe next.

We say that $c=(c_1,\dots,c_m)$ with $|c|=m$ is \emph{connected} if its support is an interval $I$.
We can encode a family of $A_c(q)$ for connected $c$ via a generating function: For $\alpha=(\alpha_1,\ldots,\alpha_k)\vDash m$ a \emph{strong }composition, we have the identity~\cite[Proposition 5.6]{NT21}
\begin{equation}
\label{eq:gf_connected}
\sum_{j\geq 0}\prod_{i=1}^k\qint{j+i}^{\alpha_i}\litspace t^j=\frac{\sum_{i=0}^{m-k}A_{0^i\alpha 0^{m-k-i}}(q) \litspace t^i}{(t;q)_{m+1}}.
\end{equation}
Here $(t;q)_{m+1}=\prod_{1\leq i\leq m+1}(1-tq^{i-1})$ stands for the \emph{$q$-Pochhammer symbol}.

\begin{remark}
It is in fact the case that $\frac{A_{0^i\alpha 0^{r-k-i}}(q)}{\qfact{k}}$ is a polynomial with nonnegative integer coefficients; see proof of~\cite[Proposition 5.4]{NT21} for the general picture.
\end{remark}

We next record another generating function identity that is suspiciously similar to \eqref{eq:gf_connected}.

\subsubsection{Hit numbers and connected remixed Eulerians}
\label{subsec:hit_remixed}

Consider a partition $\lambda$  inside an $m\times m$ square. Following~\cite{GarsiaRemmel}, up to the $q$-exponent variation discussed in~\cite{CMP}, the \emph{$q$-hit numbers} $H_j^m(\lambda)$ can be defined by: 
\begin{align}
\label{eq:qhit_gf}
\sum_{k\geq 0}t^k\prod_{1\leq i\leq m}\qint{i-\lambda_{m+1-i}+k}=\frac{\sum_{j=0}^m H_j^m(\lambda)\litspace t^j}{(t;q)_{m+1}},
\end{align}
which ought to be compared to \eqref{eq:gf_connected}. For the sake of completeness we give a quick combinatorial description for the $q$-hit number $H_k^{m,n}(\lambda)$ where $\lambda\subseteq m\times n$. The $q$-hit numbers in \eqref{eq:qhit_gf} correspond to the case $m=n$.

Let $R(m,n,\lambda,k)$ denote the set of maximal nonattacking rook placements on an $m\times n$ board such that there are exactly $k$ rooks in the Ferrers board corresponding to $\lambda$.
Given $p\in R(m,n,\lambda,k)$ we let $\mathrm{stat}(p)$ denote the number of \emph{unattacked} cells in the $m\times n$ board. 
Unattacked cells are certain cells that do not contain rooks and are defined as follows.
A cell in $\lambda$ is unattacked if it does not lie below a rook, or to the right of a rook, or to the left of a rook outside $\lambda$.
A cell outside $\lambda$ is unattacked if it does not lie below a rook or to the right of a rook outside $\lambda$.
This given, we have
\begin{align*}
H_k^{m,n}(\lambda)=\sum_{p\in R(m,n,\lambda,k)}q^{\mathrm{stat}(p)}.
\end{align*}
If $m=n$, we write $H_k^{m}(\lambda)$.
It is straightforward to check that, assuming $m\geq n$, that
\begin{align}
\qfact{m-n}\litspace \times H_k^{m,n}(\lambda)=H_k^{m}(\lambda).
\end{align}
See Figure~\ref{fig:qhit} for a maximal nonattacking rook placement $p$ where $m=4$, $n=5$ and $\lambda=(3,3,1,0)$. The six unattacked cells tell us that $q^{\mathrm{stat}(p)}=q^6$, which is the contribution of $p$ to $H_{2}^{4,5}(\lambda)$.
\begin{figure}
\includegraphics[scale=0.75]{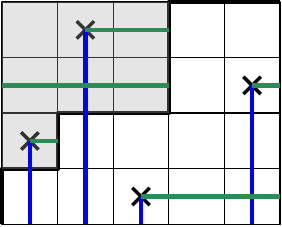}
\caption{Unattacked cells in a maximal nonattacking rook placement on a $4\times 5$ board}
\label{fig:qhit}
\end{figure}

\smallskip

To relate~\eqref{eq:gf_connected} and~\eqref{eq:qhit_gf}  we need some notation. Assume $m=n$ and $\lambda\subset m\times m$.
Define the \emph{area sequence} $\aseq{\lambda}\coloneqq (a_1,\dots,a_m)$ by setting \[
a_i=i-\lambda_{m+1-i}.
\] 
As $i$ goes from $1$ to $m$, the  $a_i$ go from $1-\lambda_m\leq 1$ to $m-\lambda_1\geq 0$ with `increments' in $\{1,0,-1,\dots\}$. 
It follows that the set of entries underlying $\aseq{\lambda}$ is an interval containing $0$ or $1$. 
Note further that the multisets underlying $\aseq{\lambda}$ and $\aseq{\lambda^t}$ are equal \textemdash{} as may be seen by a standard pairing of north and east steps at the same height for instance.

Now consider the monomial $u(\lambda)$ in $\kly$ defined as follows:
\begin{equation}
\label{eq:ulambda}
u(\lambda)\coloneqq \prod_{1\leq i\leq m} u_{a_i}.
\end{equation}
Clearly, $u(\lambda)$ depends solely on the multiset underlying $\aseq{\lambda}$.

\begin{example}
Consider $\lambda=(5,5,3,3,3,0)\subset 6\times 6$ as shown in Figure~\ref{fig:area_seq}. We have $\aseq{\lambda}=(1,-1,0,1,0,1)$ and $\aseq{\lambda^t}=(1,0,1,-1,0,1)$. Additionally, $u(\lambda)=u_{-1}u_0^2u_1^3$. 
\begin{figure}[!htbp]
\includegraphics[scale=0.4]{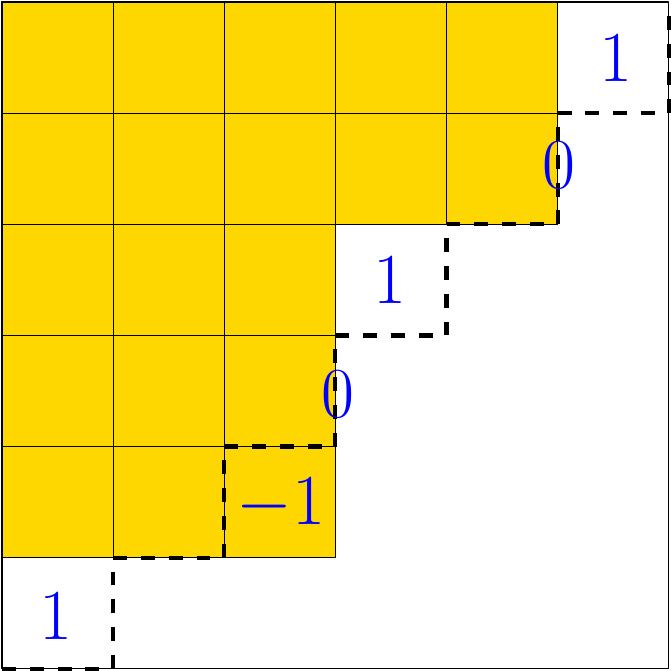}
\caption{$\lambda=(5,5,3,3,3,0)$ inside a $6\times 6$ board with $a(\lambda)=(1,-1,0,1,0,1)$.}
\label{fig:area_seq}
\end{figure}
\end{example}

It turns out that the coefficients when one expresses $u(\lambda)$ in the basis $\mc{B}$ are relevant to us.
The fact that $u(\lambda)$ has degree $m$, and that the set underlying $\aseq{\lambda}$ is an interval in $\bZ$ containing $0$ or $1$, 
implies an expansion in the basis $\mc{B}$ as follows:
\begin{equation}
\label{eq:uM_expansion}
 u(\lambda)=\sum_{k=0}^m c_k u_{[1,m]\downarrow k}.
\end{equation}
Here $[1,m]\downarrow k\coloneqq \{1-k,2-k,\dots,m-k\}$.
As established in~\cite[\S 4.2]{NTRemixed}, we have that
\begin{equation}
\label{eq:c_k_equals_q_hit}
 c_k=\frac{H_k^m(\lambda)}{(m)_q!}.
\end{equation}

The result next is essentially in~\cite{GP-notes} though not stated as such. The reader should compare this statement to Theorem~\ref{th:intro_1}: as we will see in Section~\ref{sub:guay-paquet-rectangular}, it will in fact imply it.
\begin{theorem}
\label{th:GP}
Fix nonnegative integers $m\geq n$. Let $\lambda\subset m\times n$, and 
consider the corresponding path $w=w(\lambda)$. Then in $\pathalg_{m,n}$ we have
\[
w(\lambda)=\sum_{k=0}^n\frac{H_{k}^{m,n}(\lambda)}{\qint{m}\qint{m-1}\cdots \qint{m-n+1}} \msf{e}^{k}\msf{n}^{m}\msf{e}^{n-k}.
\]
\end{theorem}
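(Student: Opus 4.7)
The plan is to use the map $\Phi$ sending a word $w$ with $m$ occurrences of $\msf{n}$ and $n$ occurrences of $\msf{e}$ to the monomial $u(w) := \prod_{i=1}^m u_{a_i(w)} \in \kly$, where $a_i(w) = i - j_i(w)$ and $j_i(w)$ counts the $\msf{e}$'s preceding the $i$-th $\msf{n}$ in $w$. Two immediate calculations are $\Phi(w(\lambda)) = u(\lambda)$ (by the very definition of the area sequence) and $\Phi(\msf{e}^k \msf{n}^m \msf{e}^{n-k}) = u_{[1,m] \downarrow k}$, a distinguished squarefree monomial in $\mc{B}$.

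The heart of the plan is to show that $\Phi$ descends from the free algebra to a well-defined linear map $\pathalg_{m,n} \to \kly$, i.e.\ that it vanishes on both modular relations \eqref{eq:mod_relations_path_algebra} and \eqref{eq:mod_relations_path_algebra_1}. For Mod~2 in any context $u \cdot s \cdot v$, both $\msf{n}$'s of $s \in \{\msf{nen}, \msf{enn}, \msf{nne}\}$ receive the same index $i$ under $\Phi$, and the relation turns into $AB \cdot u_i \cdot \bigl((1+q)u_i - qu_{i-1} - u_{i+1}\bigr) = 0$ via the Klyachko relation \eqref{eq:relation_q_Klyachko}, where $A$ and $B$ are the products coming from the $\msf{n}$'s in $u$ and $v$. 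For Mod~1 with $s \in \{\msf{ene}, \msf{een}, \msf{nee}\}$, the single $\msf{n}$ of $s$ contributes $u_d$ with $d := |u|_{\msf{n}} - |u|_{\msf{e}}$, so the relation becomes $AB \cdot \bigl((1+q)u_d - qu_{d-1} - u_{d+1}\bigr)$; the main technical point is that under the hypothesis $m \ge n$ the factor $AB$ always contains $u_d$. This is seen by analyzing the partial sums $|\text{prefix}|_{\msf{n}} - |\text{prefix}|_{\msf{e}}$ of $u$ and $v$: if $d \le 0$ then the partial sums along $v$ end at $m-n+1-d \ge 1$ and must therefore hit $1$ at some $\msf{n}$-step of $v$, whose corresponding $u$-factor in $B$ turns out to be $u_d$; if $d > 0$ then the analogous partial sums along $u$ hit $d$ and produce an $\msf{n}$ in $u$ of index $d$. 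In either case Klyachko finishes off the calculation. Verifying this Mod~1 case is the main obstacle.

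The proof then concludes as follows. The $n+1$ rectangular monomials $\msf{e}^k\msf{n}^m\msf{e}^{n-k}$ are mapped under $\Phi$ to distinct squarefree basis elements $u_{[1,m]\downarrow k}$ of $\kly$ and so are linearly independent in $\pathalg_{m,n}$; together with $\dim \pathalg_{m,n} = n+1$ from the PBW theorem for the down-up algebra $\pathalg$, they form a basis and $\Phi$ is injective on $\pathalg_{m,n}$. Applying $\Phi$ to both sides of the claimed identity, the right-hand side maps to $\sum_{k=0}^n c_k u_{[1,m]\downarrow k}$ with $c_k = H_k^{m,n}(\lambda)/(\qint{m}\qint{m-1}\cdots\qint{m-n+1})$, and this equals $u(\lambda) = \Phi(w(\lambda))$ in $\kly$ by \eqref{eq:uM_expansion}--\eqref{eq:c_k_equals_q_hit} combined with the factorial identity $H_k^m(\lambda) = \qfact{m-n} \cdot H_k^{m,n}(\lambda)$ converting $m \times m$ hit numbers to $m \times n$ ones. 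Injectivity of $\Phi$ then lifts the identity to $\pathalg_{m,n}$, yielding the theorem.
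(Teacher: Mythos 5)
Your proposal is correct and follows essentially the same route as the paper: the same map to the $q$-Klyachko algebra, the same well-definedness check (with the $\msf{ene}$ case being the only delicate point, which you handle correctly via prefix height sums where the paper uses the area sequence and reversal), and the same dimension/independence argument to transfer the expansion \eqref{eq:uM_expansion}--\eqref{eq:c_k_equals_q_hit} back to $\pathalg_{m,n}$. The only cosmetic difference is that you treat the rectangular case $m>n$ directly rather than appending $\msf{e}^{m-n}$ to reduce to the square case, and your phrase ``both $\msf{n}$'s receive the same index'' is literally true only for $\msf{nen}$, though the displayed identity you derive from it is the right one.
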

\begin{proof}
We first consider the case $m=n$. Consider the map $\psi:\pathalg_{m,m}\to \mathcal{K}$ sending
\begin{align}
w(\lambda) &\mapsto u(\lambda),
\end{align}
extended by linearity.

 We need to show that $\psi$ is well defined. To this end, we must verify that the result is unchanged when modular relations~\eqref{eq:mod_relations_path_algebra},\eqref{eq:mod_relations_path_algebra_1} are applied to $w$.

Applying the relation \eqref{eq:mod_relations_path_algebra} by changing $(1+q)\msf{ene}$ in $w$ to $q\litspace \msf{een}+\msf{nee}$ corresponds to changing an $a_i$ in the sequence $a(\lambda)$ to either $a_{i}+1$ or $a_i-1$. We can conclude with the Klyachko relation $(1+q)u_{a_i}^2=qu_{a_i}u_{a_{i}-1}+u_{a_i}u_{a_{i}+1}$, if we can find a $j\neq i$ such that  $a_j=a_i$. 

If $a_i\leq 0$ we are guaranteed that $a_{i+1}\leq a_i$. If it is equal then we are done. Otherwise $a_{i+1}<a_i$. Since $a_m\geq 0$ and increments in $a(\lambda)$ are bounded above by $1$, we must have a $j>i+1$ such that $a_j=a_i$. If $a_i \geq 1$ we apply this argument to $\mathrm{reverse}(w)$. Reversal preserves instances of $\msf{ene}$ and changes $a(\lambda)=(a_1,\dots,a_m)$ to $(1-a_m,\dots,1-a_1)$.
Thus an instance of $a_i\geq 1$ translates to $1-a_{m+1-i}\leq 0$ and we are back in the former setting.

 The case of the relation\eqref{eq:mod_relations_path_algebra_1}, namely $(1+q)\msf{nen}=q\litspace \msf{enn}+\msf{nne}$ can be dealt with similarly: it is simpler, since the occurrence of $\msf{nen}$ implies that we have the needed $u_{a_i}^2$ in the image already. Thus we have proved that $\psi$ is well defined.
\smallskip

Now note that $\pathalg_{m,m}$ has dimension $m+1$~\cite[Theorem 3.1]{BenRob}. Indeed the staircase monomials $\delta_i\coloneqq \msf{e}^{m-i}(\msf{ne})^i\msf{n}^{m-i}$ for $0\leq i\leq m$ give a basis.

Consider the $m+1$ \emph{rectangular} monomials $\rect_i=\msf{e}^{i}\msf{n}^m\msf{e}^{m-i}$.
 Since $\psi(\rect_i)=u_{[1,m]\downarrow i}$, we get that the $\rect_i$ are independent as their images are independent in $\kly$. 
We thus deduce  that the $\rect_i$  for $0\leq i\leq m$ give another basis of $\pathalg_{m,m}$.

It follows that the coefficients $c_{\lambda,i}$ in the expansion
\[
w=\sum_{0\leq i\leq m}c_{\lambda,i}\litspace\rect_i
\]
are those in the expansion
\[
\psi(w)=u(\lambda)=\sum_{0\leq i\leq m}c_{\lambda,i}\litspace u_{[1,m]\downarrow i}.
\]
Comparison with \eqref{eq:uM_expansion} implies the claim for $m=n$.
\smallskip

Assume now that $m>n$. We append $\msf{e}^{m-n}$ to $w$ at the end. This defines $w'=w(\lambda')$, where $\lambda'$ has the same shape as $\lambda$ but sits inside an $m\times m$ square. We can compute $a(\lambda')$ inside this square as before.
This forces the interval $[1,m-n]$ to be included in the set underlying $a(\lambda')$, so we can a priori restrict \eqref{eq:uM_expansion} to a smaller set of $n+1$ intervals:
\begin{equation}
\label{eq:uM_expansion_rewtricted}
 u(\lambda)=\sum_{k=0}^{n} \frac{H_k^m(\lambda)}{(m)_q!} \litspace u_{[1,m]\downarrow k}=\sum_{k=0}^{n}\frac{H_k^{m,n}(\lambda)}{\qint{m}\qint{m-1}\cdots \qint{m-n+1}}  \litspace u_{[1,m]\downarrow k}.
\end{equation}

Then the rest of the proof follows the same path as the square case.
We define $\psi$ as starting from $\pathalg_{m,n}$ by appending $\msf{e}^{m-n}$ to any element and then applying the map defined in the case $m=n$.
 The $n+1$ staircase monomials $\delta_{i,m,n}= \msf{e}^{n-i}(\msf{ne})^i\msf{n}^{m-i}$ are a basis $\pathalg_{m,n}$, so the $n+1$ rectangular monomials $\rect_{i,m,n}=\msf{e}^i\msf{n}^m\msf{e}^{n-i}$ also form one since their images $\psi(\rect_{i,m,n}\msf{e}^{m-n})$ are independent in $\kly$. We conclude that the coefficient of $\rect_{k,m,n}$ in the expansion of $w$ is given by $\frac{H_k^{m,n}(\lambda)}{\qint{m}\qint{m-1}\cdots \qint{m-n+1}}$.
\end{proof}
We consider an example next and return to the consequences of Theorem~\ref{th:GP} to chromatic symmetric functions in Section~\ref{sec:abelian}.
\begin{example}
Let $w=\msf{nennee}\in \pathalg_{3,2}$ and let $\lambda\coloneqq \lambda(w)=(1,1,0)$. Consider the six non-attacking rook placements on the $3\times 2$ board in Figure~\ref{fig:hit_rook}.
\begin{figure}
\includegraphics[scale=0.75]{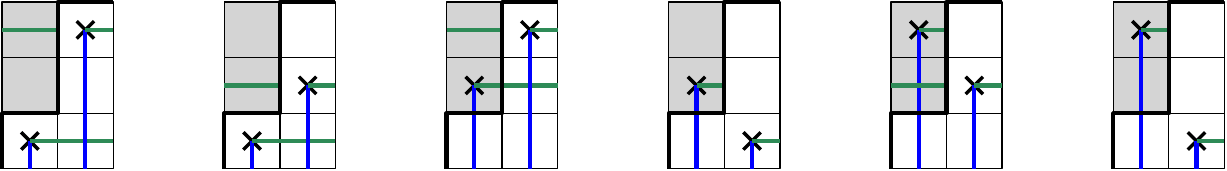}
\caption{Nonattacking rook placements on $3\times 2$ board}
\label{fig:hit_rook}
\end{figure}
The leftmost two rook placements contribute to $H_0^{3,2}(\lambda)$ and the remaining to $H_1^{3,2}(\lambda)$. We thus get
\begin{align*}
H_0^{3,2}(\lambda)&=q+q^2\\
H_1^{3,2}(\lambda)&=1+q+q^2+q^3.
\end{align*}
Theorem~\ref{th:GP} then says
\begin{align*}
\msf{nenne}=\frac{q+q^2}{\qint{3}\qint{2}}\litspace \msf{nnnee}+\frac{1+q+q^2+q^3}{\qint{3}\qint{2}}\litspace \msf{ennee}.
\end{align*}
\end{example}

\subsection{The staircase basis}
\label{sec:arbitrary_into_staircase}

Fix positive integers $m$ and $n$. Define $\staircase_{m,n}\coloneqq \staircase\cap \pathalg_{m,n}$. We know that $\staircase_{m,n}$ is a basis for $\pathalg_{m,n}$.
In this section we give an expansion for any monomial $w\in \pathalg_{m,n}$ in the basis $\staircase_{m,n}$.
Like before, we let $\delta_k\coloneqq\msf{e}^a(\msf{ne})^k\msf{n}^{b}$ where $a,b$ are such that $\delta_k\in \staircase_{m,n}$.

We begin by stating our claim. 
Let $m_w\geq 0$ be the largest integer such that the  path $P(\delta_{m_w})$ lies weakly below the path $P(w)$.

\begin{theorem}
\label{th:conj_cmp}
In  $\pathalg$, consider the basis expansion
\begin{equation}
\label{eq:staircase_expansion}
w=\sum_{0\leq k\leq m_w}(-1)^{m_w-k}c_{w,k}(q)\litspace\delta_k,
\end{equation}
Then $c_{w,k}\in \bZ_{\geq 0}[q]$ and vanishes unless $k\leq m_w$.
\end{theorem}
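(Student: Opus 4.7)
The plan is to treat the modular relations \eqref{eq:mod_relations_path_algebra} and \eqref{eq:mod_relations_path_algebra_1} as directed rewriting rules, exploiting the fact that a word lies in $\staircase$ precisely when it contains neither of the ``forbidden'' triples $\msf{nne}$ and $\msf{nee}$. Solving the two modular relations for those triples gives
\[
\msf{nne}\longmapsto (1+q)\,\msf{nen}-q\,\msf{enn},\qquad \msf{nee}\longmapsto (1+q)\,\msf{ene}-q\,\msf{een}.
\]
To establish termination I would track the statistic $\omega(w)=\sum_{i} i\cdot[w_i=\msf{n}]$: on each of the four rewrite outcomes $\omega$ strictly increases (by $1$ on the $(1+q)$-branch and by $2$ on the $(-q)$-branch), so the rewriting tree is finite. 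Its leaves furnish an expansion of the form \eqref{eq:staircase_expansion} whose coefficients lie a priori in $\bZ[q]$.

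For the vanishing of $c_{w,k}$ when $k>m_w$, I would first establish the concrete identification
\[
m_w\;=\;\min_{(x,y)\in P(w)}\,(n-x+y),
\]
which follows by unwinding the definition: the condition that $P(\delta_k)$ lies weakly below $P(w)$ reduces to a column-wise height inequality equivalent to the anti-diagonal minimum of $P(w)$ being at least $k$. A local four-vertex case analysis then shows that the anti-diagonal minimum is preserved by every $(1+q)$-rewrite and decreases by exactly $1$ on every $(-q)$-rewrite. Since vertices outside the three-letter window are unchanged, $m_w$ is weakly monotone along any branch of the tree, and because each leaf $\delta_k$ satisfies $m_{\delta_k}=k$, this forces $k\leq m_w$ at every leaf.

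The hard part will be the simultaneous sign $(-1)^{m_w-k}$ and positivity $c_{w,k}\in\bZ_{\geq 0}[q]$. Different rewriting branches can reach the same $\delta_k$, and their signed contributions $(1+q)^{a}(-q)^{b}$ need not share a parity of $b$; so the nonnegativity truly emerges only after a collective cancellation across the full tree. The plan is to bypass this by producing $c_{w,k}(q)$ directly as a generating function
\[
c_{w,k}(q)\;=\;\sum_{T\in\mathcal{T}(w,k)} q^{\mathrm{stat}(T)},
\]
where $\mathcal{T}(w,k)$ is a set of combinatorial objects naturally attached to the pair $(P(w),P(\delta_k))$ and $\mathrm{stat}$ is a suitable nonnegative integer statistic, so that $c_{w,k}(q)\in\bZ_{\geq 0}[q]$ becomes transparent. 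Correctness would then reduce to two local verifications: that $\sum_k (-1)^{m_w-k}c_{w,k}(q)\,\delta_k$ is unchanged when a modular rule is applied to $w$ (a three-letter bijection on the associated sets of objects), and that $c_{\delta_j,k}=[k=j]$. I expect the principal obstacle to be identifying $\mathcal{T}(w,k)$ so that the two modular relations translate into a clean local bijection.
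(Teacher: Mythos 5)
Your preliminary steps are sound: the characterization of $\staircase$ by avoidance of the factors $\msf{nne}$ and $\msf{nee}$, the termination argument via $\omega$, the identification $m_w=\min_{(x,y)\in P(w)}(n-x+y)$, and the weak monotonicity of this minimum under rewriting (which does yield $c_{w,k}=0$ for $k>m_w$) are all correct. But there is a genuine gap at the heart of the argument, and it is precisely where you locate the difficulty. Your claim that the anti-diagonal minimum ``decreases by exactly $1$ on every $(-q)$-rewrite'' is false for an arbitrary occurrence of $\msf{nne}$ or $\msf{nee}$: the local window's minimum drops by $1$, but the global minimum $m_w$ drops only if that window actually touches the minimizing anti-diagonal, and is otherwise unchanged. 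For instance, in $w=\msf{nnneee}$ (where $m_w=3$), applying $\msf{nne}\mapsto(1+q)\msf{nen}-q\msf{enn}$ at the occurrence in positions $2$--$4$ produces the $(-q)$-branch $\msf{nennee}$, which still has minimum $3$. Consequently different root-to-$\delta_k$ branches carry different numbers of $(-q)$-steps, the signs do not cohere, and \textemdash{} as you yourself then concede \textemdash{} the positivity would have to emerge from cancellation across the tree. Your fallback, a combinatorial model $\mathcal{T}(w,k)$ with a statistic making $c_{w,k}\in\bZ_{\geq 0}[q]$ manifest, is never constructed, so the main assertion of the theorem (the global sign $(-1)^{m_w-k}$ together with nonnegativity) remains unproved.

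The missing idea, which is the paper's, is to constrain \emph{where} and \emph{how} one rewrites so that the sign coherence holds exactly rather than emerging from cancellation. The paper uses the aggregated relations $\msf{n}^i\msf{e}=\qint{i}\,\msf{ne}\,\msf{n}^{i-1}-q\qint{i-1}\,\msf{en}\,\msf{n}^{i-1}$ and its transpose, applied only at \emph{critical factors}, i.e.\ maximal factors $\msf{n}^i\msf{e}$ or $\msf{ne}^i$ at which $P(w)$ shares an edge with the extremal staircase $P(\delta_{m_w})$. With that restriction one gets $m_{w_V}=m_w$ on the positive branch and $m_{w_H}=m_w-1$ on the negative branch \emph{exactly}, so every branch reaching $\delta_k$ has exactly $m_w-k$ negative steps, all contributions to $\delta_k$ share the sign $(-1)^{m_w-k}$, and since each edge weight is $\pm$ a polynomial in $\bZ_{\geq 0}[q]$ the positivity of $c_{w,k}$ is immediate. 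If you want to salvage your version, you would need either to prove such an invariant for a suitably restricted rewriting strategy, or actually to exhibit $\mathcal{T}(w,k)$ and the local three-letter bijections; as written, neither is done.
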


While there are in general many ways to employ the modular relation to express an arbitrary monomial $w$ in terms of staircase monomials, we are guided by
 the aim that $\msf{e}$'s and $\msf{n}'s$ move to the left and right respectively, and in doing so, force a string of $\msf{ne}$'s in between. 
At the same time, we want the signs to behave nicely in a predictable manner.
We will need solely the two relations:
\begin{align}
\label{eq:relation_eu}
\msf{n}^i\msf{e}&=\qint{i}\textcolor{blue}{\msf{ne}}\litspace \msf{n}^{i-1}-q\qint{i-1}\litspace \textcolor{blue}{\msf{en}}\litspace\msf{n}^{i-1}\\
\label{eq:relation_nu}
\msf{n}\msf{e}^i&=\qint{i}\msf{e}^{i-1}\litspace \textcolor{blue}{\msf{ne}}-q\qint{i-1}\litspace\msf{e}^{i-1}\litspace \textcolor{blue}{\msf{en}}.
\end{align}
These relations follow from the modular relations easily.
The second one follows from the first by applying the transpose $\eta$. 
Additionally, and crucially, observe that the coefficients involved are, up to a sign, polynomials in $\bZ_{\geq 0}[q]$.
\medskip

We state next our crucial definition that governs how the aforementioned relations apply in the course of our procedure.
\begin{definition}
Consider a factor $w'$ in $w$  where $w'=\msf{n}^i\msf{e}$ or $w'=\msf{ne}^i$ with $i\geq 2$ maximal. We say that $w'$ is \emph{critical} if $P(w)$ shares an edge with the path $P(\delta_{m_w})$ at one of the letters in $w'$.
,\end{definition}

Note that by definition of $m_w$, the letter in the critical factor that corresponds to $P(\delta_{m_w})$ is necessarily the starting $\msf{n}$ if  $w'=\msf{n}^i\msf{e}$, and the last $\msf{e}$ if $w'=\msf{ne}^i$.

\begin{lemma}
\label{lemma:critical}
Fix $w$ a word in $\{\msf{n},\msf{e}\}$ The following are equivalent. 
\begin{enumerate}
\item $w$ does not possess a critical factor.
\item $w$ corresponds to a staircase monomial.
\end{enumerate}
\end{lemma}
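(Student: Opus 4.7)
The plan is to prove the two implications separately. The direction (2)$\Rightarrow$(1) is essentially a formal check, and (1)$\Rightarrow$(2) is the substantive content.

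For (2)$\Rightarrow$(1), write $w = \msf{e}^a(\msf{ne})^b\msf{n}^c$ and inspect. Each $\msf{n}$ inside $(\msf{ne})^b$ is followed by exactly one $\msf{e}$; the trailing block $\msf{n}^c$ is not followed by any $\msf{e}$; and symmetrically for the $\msf{e}$'s on the other side. Hence $w$ contains no factor of the form $\msf{n}^i\msf{e}$ or $\msf{ne}^i$ with $i\geq 2$ at all, and in particular no critical one.

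For (1)$\Rightarrow$(2) I argue the contrapositive. Suppose $w$ is not a staircase. Writing $w = \msf{e}^a w''\msf{n}^c$ with $w''$ starting in $\msf{n}$ and ending in $\msf{e}$ (or empty), the absence of two consecutive $\msf{n}$'s or two consecutive $\msf{e}$'s in $w''$ would force $w'' = (\msf{ne})^b$, making $w$ a staircase; so $w$ admits at least one factor of the form $\msf{n}^i\msf{e}$ or $\msf{ne}^i$ with $i\geq 2$ (maximal). Call these \emph{candidate} factors. It remains to show that at least one candidate factor is critical. Suppose for contradiction that none is: then along every candidate factor the path $P(\delta_{m_w})$ lies strictly below $P(w)$, while $P(w)$ and $P(\delta_{m_w})$ coincide only on the stretches of $w$ whose local structure is already that of a staircase. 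I would exploit this strict gap to locally modify $\delta_{m_w}$, trading one initial $\msf{e}$ and one trailing $\msf{n}$ for an additional $\msf{ne}$-pair inserted in the gap, thereby producing a path equal to $\delta_{m_w+1}$ that is still weakly below $P(w)$; this contradicts the maximality of $m_w$.

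The principal obstacle is this last construction step: turning the qualitative hypothesis ``every candidate factor is detached from $P(\delta_{m_w})$'' into an explicit modification that produces $\delta_{m_w+1}$ below $P(w)$. I need to verify that the strict gap along a candidate factor is wide enough to accommodate the extra $\msf{ne}$-pair, and that the rebalanced staircase does not rise above $P(w)$ outside the candidate factor. I expect this to require a careful local case analysis distinguishing candidate factors of the two shapes $\msf{n}^i\msf{e}$ and $\msf{ne}^i$, and tracking how $P(\delta_{m_w})$ reattaches to $P(w)$ at the boundary letters of the candidate factor, after which the verification should reduce to a routine geometric check. (The symmetry $\eta$ introduced in Section~\ref{subsec:basic_prop} can be used to reduce the two shapes to one case.)
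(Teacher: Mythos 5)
Your direction (2)$\Rightarrow$(1) and your observation that a non-staircase word must contain a factor $\msf{n}^i\msf{e}$ or $\msf{ne}^i$ with $i\geq 2$ are both correct, and together they match the easy half of the paper's argument. But the entire content of the lemma is the step you defer: showing that at least one such candidate factor actually shares an edge with $P(\delta_{m_w})$. You flag this as ``the principal obstacle'' and sketch a strategy without carrying it out, so as written the proof is incomplete precisely where it matters.

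Moreover, the strategy you outline has a specific weak point. The hypothesis ``no candidate factor is critical'' does not mean that $P(w)$ and $P(\delta_{m_w})$ are edge-disjoint: they may share edges at letters of $w$ lying in no candidate factor (in the initial run of $\msf{e}$'s, the final run of $\msf{n}$'s, or along alternating stretches $\dots\msf{enen}\dots$). The passage from $\delta_{m_w}$ to $\delta_{m_w+1}$ is not local\textemdash{}it raises the path along an entire diagonal strip of $m_w+1$ cells whose bottom and right edges all lie on $P(\delta_{m_w})$\textemdash{}so any shared edge on that strip, whether or not it sits in a candidate factor, obstructs your proposed modification. What you would actually have to prove is that every shared edge on that strip is contained in a candidate factor of $w$ (i.e., that the weak-domination of $P(\delta_{m_w})$ by $P(w)$ forces either a block $\msf{n}^i$ with $i\geq 2$ arriving at that edge or a block $\msf{e}^i$ with $i\geq 2$ containing it and preceded by $\msf{n}$); that is essentially the original claim and not a ``routine geometric check.'' The paper argues in the opposite direction: maximality of $m_w$ forces $P(w)$ and $P(\delta_{m_w})$ to share an edge; one then takes a maximal run of agreement and examines its extremes, where $P(w)$ must peel off strictly upward on the right (and come in from the left on the left), and this local configuration produces a candidate factor containing a shared edge, hence a critical one. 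I would recommend reorganizing your argument around the shared edges of $P(w)$ and $P(\delta_{m_w})$ rather than around the candidate factors where the paths are assumed to be apart.
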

\begin{proof}
It is immediate that monomials in $\staircase_{m,n}$ do not contain critical factors. Hence assume $w\notin \staircase_{m,n}$ and consider the path $P(\delta_{m_w})$. It agrees, i.e. shares an edge, with $P(w)$ at various junctures. At one of the two extremes (or both) of any maximal factor of agreement, there must be a critical factor for $w$. At the right extreme, this will be a factor of the form $\msf{n}^i\msf{e}$. At the left extreme this will be the transposed version, i.e. $\msf{n}\msf{e}^i$. 
\end{proof}

\begin{figure}
\includegraphics[scale=0.7]{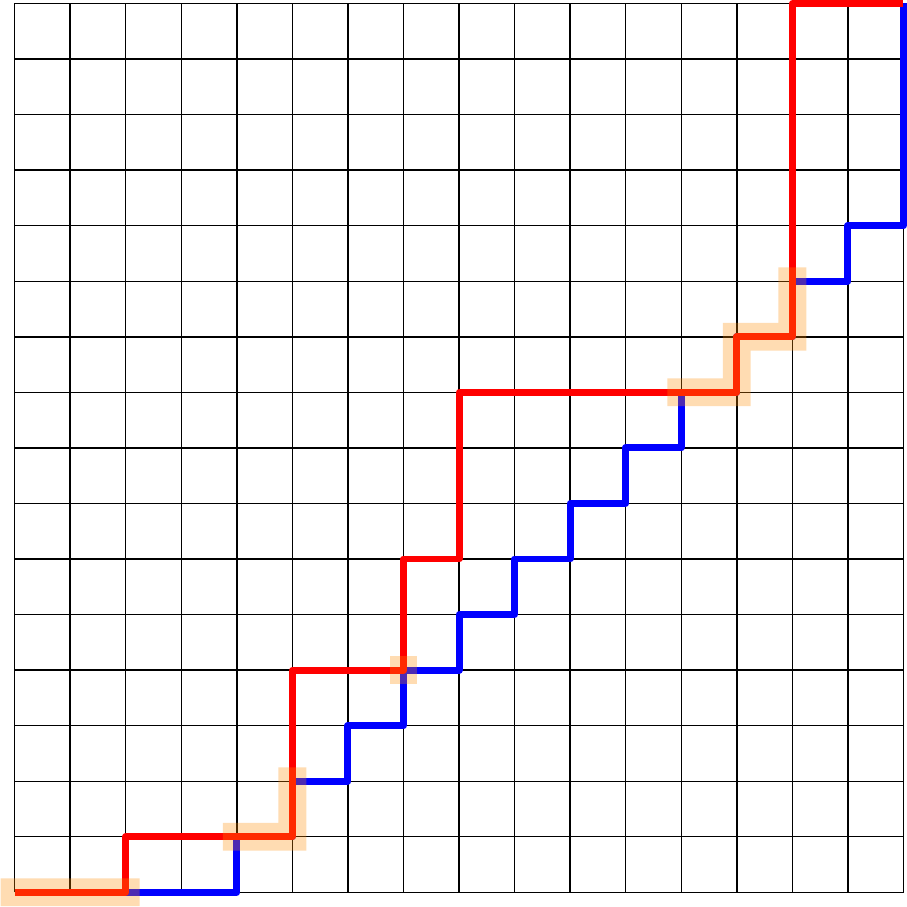}
\caption{A path $P(w)$ (in red) and the associated $P(\delta_{m_w})$ (in blue). The subpaths where the two touch are highlighted.}
\label{fig:critical}
\end{figure}

\noindent\textbf{The rewriting procedure:}
We now describe a rewriting procedure that takes as input any linear combination of words $C=\sum_wf_ww$. 
\begin{enumerate}
\item Pick $w$ such that $f_w\neq 0$ and $w$ possesses a critical factor. If no such $w$ exists, the procedure terminates and outputs $C$. 
\item Pick any critical factor $v$ in $w$. 
Modify $C$ by replacing the critical factor $v$ in $w$ according to the relations \eqref{eq:relation_eu},\eqref{eq:relation_nu} applied from left to right. Go back to the first step. 
\end{enumerate}

In the second step of the procedure, let $w_V,w_H$ be the two words that are obtained from a word $w$ after applying the relations~\eqref{eq:relation_eu},\eqref{eq:relation_nu}. Here $w_V$ comes with a positive weight $\qint{i}$, while $w_H$ comes with a negative weight $-q\qint{i-1}$.

Figure~\ref{fig:example_algo} shows an execution of  this algorithm for $w=\msf{nneeenne}$, representing naturally the rewriting procedure as a binary tree. We omitted the weights on the edges to keep the picture legible.
  
\begin{figure}
\includegraphics[scale=0.65]{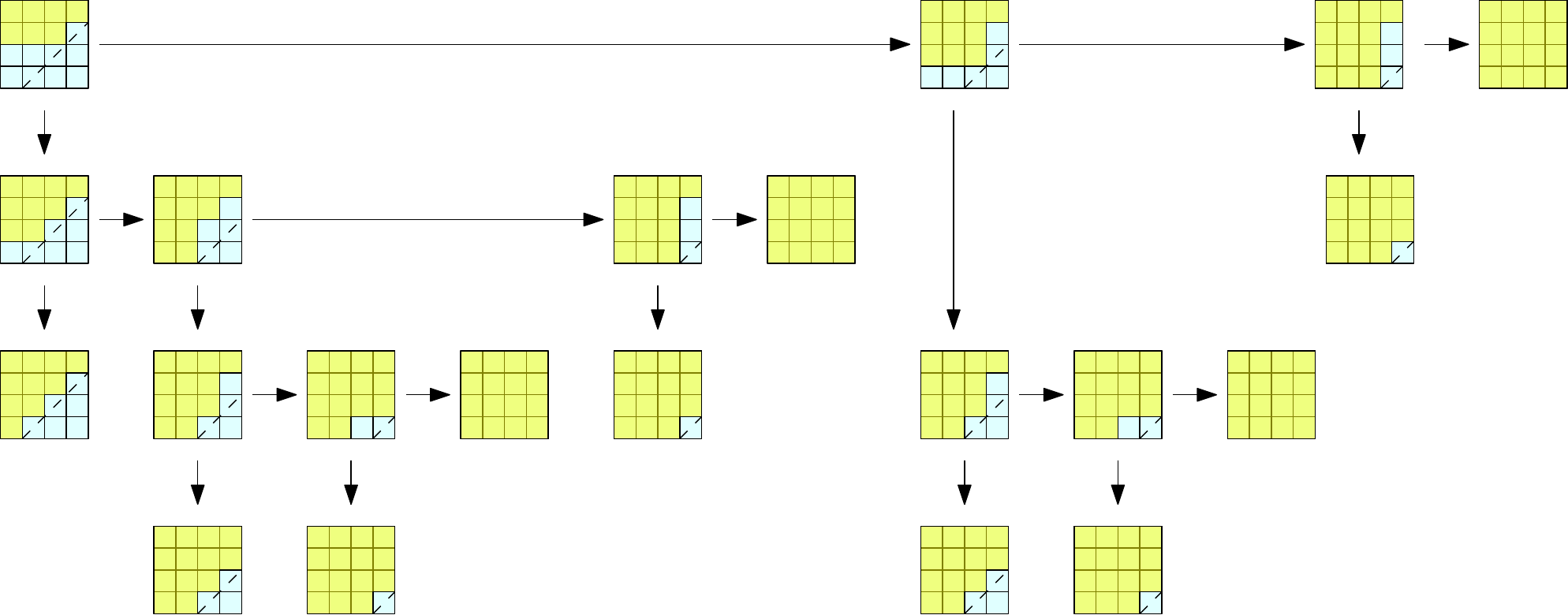}
\caption{Rewriting algorithm applied to $w=\msf{nneeenen}$. Horizontal (respectively vertical) arrows represent $w\to w_H$ (respectively $w\to w_V$).}
\label{fig:example_algo}
\end{figure}

\begin{proof}[Proof of Theorem~\ref{th:conj_cmp}]

First note that the rewriting procedure will necessarily end, as the shapes corresponding to the words are strictly increasing after each step of the procedure. It follows that the final output will be a linear combination of words with no critical factors, which represents the same element in $\pathalg$ as the starting linear combination since we only apply relations that are valid in $\pathalg$. By Lemma~\ref{lemma:critical}, this will indeed be the expansion into staircase monomials as desired.

Now a key remark is that for any word $w$, and any of its critical factors,we have  $m_{w_V}=m_w$ while $m_{w_H}=m_w-1$, where $w_H,w_V$ are defined above. 
Since $m_{\delta_k}=k$, any sequence of rewritings that goes from $w$ to $\delta_k$ will then necessarily involve $m_w-k$ sign switches, as $w_H$ comes with a negative weight while $w_V$ has a positive weight.

 It follows that the global sign of the coefficient of $\delta_k$ is $(-1)^{m_w-k}$, and thus that $c_{w,k}\in \bZ_{\geq 0}[q]$. It is also immediate from the procedure that $c_{w,k}=0$ if $k>m_w$.
\end{proof}

\begin{example}
Consider $w=\msf{nneeenen}$ as in Figure~\ref{fig:example_algo}.
There are exactly two paths from root to a leaf representing $\delta_2=\msf{eenenenn}$, both of which involve a single horizontal edge.
By considering the weights for each path we conclude that the coefficient of $\delta_2$ in $w$ is
\[
-q\qint{2}\qint{2}\qint{1}-q\qint{2}\qint{1}\qint{1}\qint{1}=-q(1+q)(2+q).
\]
\end{example}

\begin{remark}(Proof of ~\cite[Conjecture 6.6]{CMP})
Theorem~\ref{th:conj_cmp} implies easily~\cite[Conjecture 6.6]{CMP}.\footnote{Their conjecture is stated in terms of chromatic symmetric functions, but we explain in Section~\ref{sub:guay-paquet-rectangular} why this can be expressed in the algebra $\pathalg$.} 
The staircase basis in~\cite[Section 6]{CMP} corresponds to staircase paths in the top left corner. To expand into this basis, one needs to use the ``reverse'' rewriting rules, which are obtained from~\eqref{eq:relation_eu},\eqref{eq:relation_nu} by reversing the words and changing $q$ to $q^{-1}$:
\begin{align}
\label{eq:relation_eu_cmp}
\msf{e}^i\msf{n}&=q^{1-i}\qint{i}\textcolor{blue}{\msf{en}}\litspace \msf{e}^{i-1}-q^{1-i}\qint{i-1}\litspace \textcolor{blue}{\msf{ne}}\litspace\msf{e}^{i-1},\\
\label{eq:relation_nu_cmp}
\msf{e}\msf{n}^i&=q^{1-i}\qint{i}\msf{n}^{i-1}\litspace \textcolor{blue}{\msf{en}}-q^{1-i}\qint{i-1}\litspace\msf{n}^{i-1}\litspace \textcolor{blue}{\msf{ne}}.
\end{align}
This results in polynomials in $q^{-1}$ for the coefficients, instead of the polynomials in $q$ that we obtain in~\ref{th:conj_cmp}. 
\end{remark}

The proof in fact tells us a little bit more \textemdash{} we must have all $c_{w,k}\neq 0$ for $0\leq k\leq m_w$. Also, since the only way to hit the staircase monomial $\delta_{m_w}$ is by applying moves $w\to w_V$ at all stages, we get an explicit description for $c_{w,m_w}$ as a product of $q$-integers. For instance, for $w$ in Figure~\ref{fig:critical}, we have 
\[
c_{w,m_w}=\qint{3}\hspace{2mm} \qint{2}\qint{3}\qint{2}\qint{3}\qint{3}\qint{4}\qint{5} \hspace{2mm} \qint{6}\qint{5}.
\]

It is easy to give a characterization of this product in terms of $w$. More generally, it would be interesting to find a combinatorial interpretation for all the coefficients $c_{w,k}$.

\subsection{The zigzag basis}
\label{sub:zigzag}

For the purposes of this section, we set $s\coloneqq \msf{en}$ and $\msf{t}\coloneqq \msf{ne}$.

We return to the map $\psi$ defined in the proof of Theorem~\ref{th:GP}, except this  time we take its domain as $\pathalg^{\mathrm{diag}}$.
Then $\psi$ is an algebra homomorphism into $\kly$ since $u(\lambda\cdot\mu)=u(\lambda)u(\mu)$. As it sends a basis to independent vectors, it is  injective, and its image is the subalgebra of $\mathcal{K}$ with basis given by the $u_I$ with $I$ an interval containing $0$ or $1$. 
Equivalently, it is the subalgebra of $\kly$ generated by $u_0$ and $u_1$, which is free on the generators. In turn, this implies the following:

\begin{proposition}
\label{prop:Pdiag_polynomial}
$\pathalg^{\mathrm{diag}}$ is the (commutative) polynomial algebra $\bC(q)[\msf{s},\msf{t}]$.
\end{proposition}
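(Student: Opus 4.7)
The plan is to package the assertions sketched in the paragraph preceding the proposition into a clean argument, with the map $\psi$ from the proof of Theorem~\ref{th:GP}, restricted now to $\pathalg^{\mathrm{diag}}$, as the central tool.

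First I would verify that $\psi$ is an algebra homomorphism into $\kly$ on the diagonal subalgebra. The required identity $u(\lambda(ww')) = u(\lambda(w))\,u(\lambda(w'))$ for $w \in \pathalg_{m,m}$, $w' \in \pathalg_{n,n}$ reduces to checking that the area sequence of the concatenated path in the $(m+n)\times(m+n)$ box equals the concatenation $a(\lambda(w)) \cdot a(\lambda(w'))$. This is direct: the first $m$ north steps contribute unchanged $a_i$'s because the cells to their left in the big box are exactly those in the small box of $P(w)$, while for a north step in the shifted copy of $P(w')$ the $m$ extra columns swept out by $P(w)$ cancel against the row-index shift, yielding $a'_{i-m}$. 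Together with commutativity of $\kly$ this gives multiplicativity. Injectivity of $\psi$ on each $\pathalg_{m,m}$ was already established in the proof of Theorem~\ref{th:GP}: the rectangular basis $\{\rect_i\}_{0 \leq i \leq m}$ is mapped to the distinct basis elements $\{u_{[1,m]\downarrow i}\}$ of $\mcbk$, so $\psi$ is injective on all of $\pathalg^{\mathrm{diag}}$.

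Next, a direct evaluation in the $1 \times 1$ box gives $\psi(\msf{s}) = \psi(\msf{en}) = u_0$ and $\psi(\msf{t}) = \psi(\msf{ne}) = u_1$. Since $u_0$ and $u_1$ commute in $\kly$, injectivity of $\psi$ forces $\msf{s}\msf{t} = \msf{t}\msf{s}$ in $\pathalg^{\mathrm{diag}}$, so the natural algebra map $\varphi : \bC(q)[\msf{s}, \msf{t}] \to \pathalg^{\mathrm{diag}}$ is well-defined, and $\psi \circ \varphi$ sends $\msf{s}^a \msf{t}^b$ to $u_0^a u_1^b$.

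To conclude, I would argue that $\varphi$ is an isomorphism by a graded dimension count. In each degree $m$, the $m+1$ monomials $u_0^a u_1^b$ with $a+b = m$ are linearly independent in $\kly$: their expansions in the basis $\mcbk$ can be computed inductively from the quadratic relation $(1+q)u_i^2 = qu_iu_{i-1} + u_iu_{i+1}$, which exhibits a triangular structure with respect to the interval ordering with a distinguished leading term $u_{[1-a,\,b]}$ that determines $(a,b)$. Pulling back through the injective $\psi$, the $m+1$ monomials $\msf{s}^a\msf{t}^b$ are linearly independent in $\pathalg_{m,m}$, which has dimension $m+1$ by the PBW basis $\staircase_{m,m}$ recalled in Section~\ref{subsec:basic_prop}. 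Hence they form a basis, so $\varphi$ is bijective in each degree, identifying $\pathalg^{\mathrm{diag}}$ with $\bC(q)[\msf{s}, \msf{t}]$. The main (and essentially only nontrivial) ingredient is the freeness of the subalgebra $\bC(q)[u_0, u_1] \subset \kly$, which is asserted in the paragraph preceding the proposition and can be either cited from \cite{NT21} or derived by the triangular expansion just sketched.
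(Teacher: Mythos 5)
Your overall route is the same as the paper's: restrict $\psi$ to $\pathalg^{\mathrm{diag}}$, check multiplicativity via concatenation of area sequences, use injectivity (rectangular monomials mapping to distinct squarefree basis elements), and identify the image with the free subalgebra $\bC(q)[u_0,u_1]\subset\kly$. Your verification of multiplicativity and your dimension count making $\varphi$ an isomorphism in each degree are correct and in fact slightly more careful than the paper, which passes from ``image $=$ span of the $u_I$ with $I$ an interval containing $0$ or $1$'' to ``subalgebra generated by $u_0,u_1$, free on the generators'' without comment.

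The one place where you add genuinely new content is the claimed triangularity proof of the linear independence of the monomials $u_0^au_1^b$, and that claim is false. For $a,b\geq 1$ the expansion of $u_0^au_1^b$ in the basis $\mc{B}$ is supported on \emph{all} intervals $[1,m]\downarrow k$ with $1\leq k\leq m-1$: the coefficients are (up to $\qfact{m}$) the connected remixed Eulerian numbers $A_{0^{k-1}(a,b)0^{m-k-1}}(q)$, which are all nonzero since they specialize at $q=1$ to Postnikov's mixed Eulerian numbers, which are positive. Concretely, for $m=3$ one computes
\[
u_0^2u_1=\tfrac{q(1+q)}{1+q+q^2}\,u_{\{-1,0,1\}}+\tfrac{1}{1+q+q^2}\,u_{\{0,1,2\}},\qquad
u_0u_1^2=\tfrac{q^2}{1+q+q^2}\,u_{\{-1,0,1\}}+\tfrac{1+q}{1+q+q^2}\,u_{\{0,1,2\}},
\]
so your ``leading term'' $u_{[1-a,b]}$ occurs with nonzero coefficient in both monomials and there is no triangular transition matrix in any ordering (independence here holds, but not for the reason you give). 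Since you explicitly offer the fallback of citing the freeness of $\bC(q)[u_0,u_1]$ from \cite{NT21} --- which is exactly the level at which the paper itself leaves this point --- your proof stands with that citation; just delete or repair the triangularity sketch, as written it would not survive refereeing.
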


\begin{remark}
Benkart--Roby~\cite[Proposition 3.5]{BenRob} establish that for a general down-up algebra $A(\alpha,\beta,\gamma)$, the subalgebra $A_0$ (i.e. the analogue of $\pathalg^{\mathrm{diag}}$) is always a commutative subalgebra. The proof in \emph{loc. cit.} is elementary albeit involved.\footnote{The reader should note that the grading employed in \cite{BenRob} is not our bigrading, but a weaker one that can be defined for any down-up algebra.}
Kirkman--Musson--Passman~\cite{KMP99} show that the subalgebra generated by $ud$ and $du$ in a general down-up algebra $A(\alpha,\beta,\gamma)$ over a field $K$ is a polynomial algebra in those two generators provided that $\beta\neq 0$. Recalling that $\pathalg$ is $A(1+q,-q,0)$, it is possible to apply their result in our context and obtain another proof of Proposition~\ref{prop:Pdiag_polynomial}.
\end{remark}

We are thus naturally led to the question of expanding monomial $w\in\pathalg_{m,m}$ in terms of $\msf{s}$ and $\msf{t}$. 
We consider a more general rectangular version.
Fix nonnegative integers $m\geq n$. Consider the set of \emph{zigzag} monomials defined as follows:
\begin{align}
\zigzag_{m,n}&=\{\msf{s}^a\msf{t}^{n-a}\msf{n}^{m-n}\suchthat 0\leq a\leq n\} 
\end{align}
For $m<n$, define $\zigzag_{m,n}$ by employing $\eta$. 
These zigzag monomials show up in~\cite[Section 2.1]{KMP99}.

Fix a word $w\in \pathalg_{m,n}$ with associated path and partition being $P$ and $\lambda$ respectively.
Define the sequence $b(\lambda)=(b_1,\dots,b_n)$ as follows:
\begin{align*}
b_i=\left\lbrace \begin{array}{cc}
m+1-i-\lambda'_i & \lambda_{m+1-i}<i\\
i-\lambda_{m+1-i} &\lambda_{m+1-i}\geq i. 
\end{array}\right.
\end{align*}
Informally, the sequence $b(\lambda)$ measures the distance  from the diagonal in the same vein as the area sequence $a(\lambda)$ from before. Figure~\ref{fig:b_sequence} gives an example where $m=11$ and $n=9$.
We either take the heights of the green shaded rectangle or the lengths of the red shaded rectangles. 
These capture the two cases that occur in the definition, and we get $b(\lambda)=(2,1,-1,0,4,5)$. 

\begin{figure}
\includegraphics[scale=0.7]{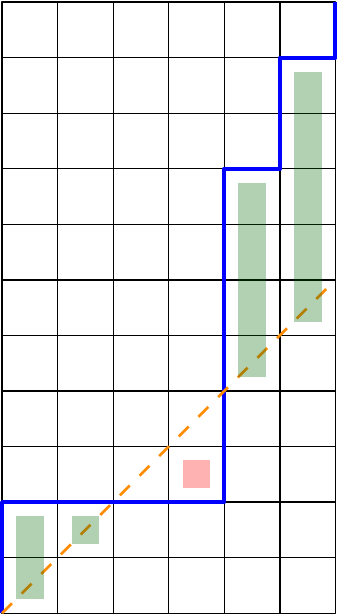}
\caption{}
\label{fig:b_sequence}
\end{figure}

For $i\in \bZ$ define $\mathrm{wt}_i\in \pathalg_{1,1}$ for $i\in \bZ$ as follows:
\begin{align*}
\mathrm{wt}_i=\left\lbrace \begin{array}{ll} 
\qint{i}\litspace \msf{t}-q\qint{i-1}\litspace \msf{s} & i\geq 1\\
q^{i}\left(\qint{1-i}\litspace \msf{s}-\qint{-i}\litspace \msf{t}\right) & i\leq 0.
\end{array}\right.
\end{align*}
This choice will become transparent during the course of the following proof. Note that $\mathrm{wt}_0=\msf{s}$ and $\mathrm{wt}_1=\msf{t}$.
\begin{proposition}
\label{prop:ne_en_factorization}
Fix a monomial $w\in \pathalg_{m,n}$ where $m\geq n$. 
We have
\begin{align*}
w=\mathrm{wt}_{b_1}\cdots \mathrm{wt}_{b_n}\cdot \msf{n}^{m-n}.
\end{align*}
\end{proposition}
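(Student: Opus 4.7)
The plan is induction on $n$. For $n=0$, the formula reduces to $w=\msf{n}^m$, which is trivial.

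For $n\geq 1$, I would split on the first letter of $w$. If $w=\msf{n}^i\msf{e}w'$ with $i\geq 1$ maximal (case A), then the bottom $i$ rows of $\lambda$ are empty and $\lambda'_1=m-i$, whence $b_1=i$ from the first branch of the definition. Using~\eqref{eq:relation_eu}, the prefix $\msf{n}^i\msf{e}$ rewrites as $\wt_i\,\msf{n}^{i-1}$, so $w=\wt_{b_1}\cdot w''$ with $w'':=\msf{n}^{i-1}w'\in\pathalg_{m-1,n-1}$. If instead $w=\msf{e}^j\msf{n}w'$ with $j\geq 1$ maximal (case B), then $\lambda_m=j$ and $b_1=1-j$ from the second branch. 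A direct computation shows that the $i\leq 0$ case in the definition of $\wt_i$ is designed precisely so that~\eqref{eq:relation_eu_cmp} reads $\msf{e}^j\msf{n}=\wt_{1-j}\,\msf{e}^{j-1}$, giving $w=\wt_{b_1}\cdot w''$ with $w'':=\msf{e}^{j-1}w'\in\pathalg_{m-1,n-1}$.

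Let $\lambda''$ be the partition of $w''$ inside the $(m-1)\times(n-1)$ box. Geometrically, $\lambda''$ is $\lambda$ with the leftmost column removed (case A), or $\lambda$ with both the leftmost column and the bottom row removed (case B). The key claim is the combinatorial identity
\[
b_k(\lambda'')=b_{k+1}(\lambda),\qquad 1\leq k\leq n-1.
\]
In both cases $\lambda''_{m-k}$ equals either $\lambda_{m-k}$ or $\lambda_{m-k}-1$, which forces the two conditions $\lambda_{m-k}<k+1$ and $\lambda''_{m-k}<k$ to hold or fail together, so the two-branch definition of $b$ restricts consistently to $\lambda''$. A short check then matches the resulting values using $(\lambda'')'_k=\lambda'_{k+1}$, with the necessary adjustments in case B when $k<j$, where the second branch is automatically selected since $\lambda_{m-k}\geq\lambda_m=j\geq k+1$.

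Applying the induction hypothesis to $w''$ (and noting $(m-1)-(n-1)=m-n$) gives $w''=\wt_{b_2}\cdots\wt_{b_n}\,\msf{n}^{m-n}$, and thus $w=\wt_{b_1}\cdots\wt_{b_n}\,\msf{n}^{m-n}$, completing the induction. The main obstacle is the careful bookkeeping for the $b$-recursion, particularly in case B where a row and a column both disappear; the algebraic content of the argument is entirely packaged in the two rewriting rules~\eqref{eq:relation_eu} and~\eqref{eq:relation_eu_cmp}.
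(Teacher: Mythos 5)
Your proof is correct and follows essentially the same route as the paper: induction on $n$, splitting on the initial letter of $w$ to peel off $\mathrm{wt}_{b_1}$ via the rewriting rules, then recursing on the word in the smaller bounding box. You are in fact more explicit than the paper about the recursion $b_k(\lambda'')=b_{k+1}(\lambda)$ (including the correct observation that in case B the first branch of the definition of $b$ cannot occur for $k<j$), a verification the paper leaves implicit in the phrase ``proceed by induction.''
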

\begin{proof}
If $n=0$, there is nothing to show as $w$ must necessarily equal $\msf{n}^{m-n}$. So we assume $n\geq 1$ and consider two cases.

Suppose $w=\msf{n}^i\msf{e}w'$ where $i\geq 1$. Then $i$ must necessarily equal $m-\lambda'_1$, which is $b_1$.
We thus have
\begin{align}
\label{eq:blah}
w=\msf{n}^{b_1}\msf{e}w'&=
\left((1-\qint{b_1})\msf{en}\cdot\msf{n}^{b_1-1}+\qint{b_1}\msf{ne}\cdot\msf{n}^{b_1-1}\right) w'=\mathrm{wt}_{b_1}\cdot \msf{n}^{b_1-1}w'.
\end{align}
Now $\msf{n}^{b_1-1}P'$ is a word representing a path in a smaller bounding box, and we can proceed by induction.

On the other hand, if $w=\msf{e}^i\msf{n}w'$ with $i\geq 1$, we must have $i=\lambda_{m}=1-b_1$.
Now mimicking the above argument we get
\begin{align}
\label{eq:blahblah}
w=\msf{e}^{1-b_1}\msf{n}w'&=
\left((1-(1-b_1)_{q^{-1}})\msf{ne}+(1-b_1)_{q^{-1}}\msf{en}\right)\msf{e}^{-b_1}w'\nonumber\\
&=q^{b_1}(\qint{1-b_1}\msf{en}-\qint{-b_1}\msf{ne})\msf{e}^{-b_1}w'=\mathrm{wt}_{b_1}\cdot \msf{e}^{-b_1}w'
\end{align}
Again $\msf{e}^{-b_1}P'$ is a word representing a path in a smaller bounding box and we may apply induction.
\end{proof}

We note that the $\mathrm{wt}_{b_i}$ all commute, so the product can be written in various ways.

\begin{example}
Referring to Figure~\ref{fig:b_sequence}, we have $w=\msf{n}^2\msf{e}^4\msf{n}^6\msf{e}\msf{n}^2\msf{en}$. Noting that $b(\lambda)=(2,1,-1,0,4,5)$, we get that
\[
w=(\qint{2}\litspace\msf{s}-q\litspace\msf{t})\cdot \msf{s}\cdot q^{-1}(\qint{2}\litspace\msf{t}-\msf{s})\cdot \msf{t} \cdot (\qint{4}\litspace\msf{s}-q\qint{3}\litspace\msf{t})\cdot (\qint{5}\litspace\msf{s}-q\qint{4}\litspace\msf{t})\cdot \msf{n}^{11-6}.
\]
\end{example}
\begin{theorem}
\label{th:zigzag_coefficient}
Consider the basis expansion in $\pathalg$
\begin{align*}
w=\sum_{0\leq r\leq n}c_{w,r}\litspace \msf{s}^r\msf{t}^{n-r}\msf{n}^{m-n}.
\end{align*}
Then $c_{w,r}$ is a \emph{globally signed} Laurent polynomial.
\end{theorem}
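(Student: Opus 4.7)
The plan is to exploit Proposition~\ref{prop:ne_en_factorization} together with the commutativity of $\msf{s}$ and $\msf{t}$ in $\pathalg^{\mathrm{diag}}$ from Proposition~\ref{prop:Pdiag_polynomial}. Assuming $m\geq n$, we have $w=\mathrm{wt}_{b_1}\cdots\mathrm{wt}_{b_n}\cdot\msf{n}^{m-n}$, where each $\mathrm{wt}_{b_i}$ is of the form $\alpha_i\msf{s}+\beta_i\msf{t}$ with $\alpha_i,\beta_i\in\bC(q)$. Since $\msf{s}$ and $\msf{t}$ commute, distributivity gives
\[
c_{w,r}=\sum_{\substack{S\subseteq[n]\\|S|=r}}\prod_{i\in S}\alpha_i\prod_{j\notin S}\beta_j.
\]
The case $m<n$ reduces to $m\geq n$ by applying the antiautomorphism $\eta$.

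Next I would record the sign structure of $\alpha_i$ and $\beta_i$ using the explicit definition of $\mathrm{wt}_i$. For $b_i\geq 1$, $\alpha_i=-q\qint{b_i-1}$ is a globally nonpositive polynomial in $q$ while $\beta_i=\qint{b_i}$ is globally nonnegative; for $b_i\leq 0$, $\alpha_i=q^{b_i}\qint{1-b_i}$ is a globally nonnegative Laurent polynomial and $\beta_i=-q^{b_i}\qint{-b_i}$ is globally nonpositive. Hence each $\alpha_i$ and each $\beta_i$ is individually a globally signed Laurent polynomial in $q$, and since a product of globally signed Laurent polynomials is again globally signed (every monomial in the product inherits the product of the signs), each summand $\prod_{i\in S}\alpha_i\prod_{j\notin S}\beta_j$ is globally signed.

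The crucial step, and the main obstacle, is to verify that the overall sign is the \emph{same} for every $S$ with $|S|=r$, so that the summands reinforce rather than partially cancel. Let $P_+=\{i:b_i\geq 1\}$ and $P_-=\{i:b_i\leq 0\}$; these partition $[n]$. The summand corresponding to $S$ acquires a minus sign precisely at the indices in $S\cap P_+$ (where we chose $\alpha_i$ with $b_i\geq 1$) and at the indices in $P_-\setminus S$ (where we chose $\beta_j$ with $b_j\leq 0$), so its sign is $(-1)^{|S\cap P_+|+|P_-\setminus S|}$. Using $|S\cap P_+|=r-|S\cap P_-|$ and $|P_-\setminus S|=|P_-|-|S\cap P_-|$, this exponent equals $r+|P_-|-2|S\cap P_-|$, whose parity is $r+|P_-|$ independently of $S$. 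Therefore every summand carries the common sign $(-1)^{r+|P_-|}$, so $c_{w,r}$ is globally signed. Without this uniform parity cancellation, global signedness of individual factors would not suffice, which is precisely what makes the statement nontrivial.
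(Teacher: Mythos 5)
Your proof is correct and follows essentially the same route as the paper's: expand $w$ via Proposition~\ref{prop:ne_en_factorization}, write $c_{w,r}$ as a sum over $r$-subsets $S$ of products of the $\msf{s}$- and $\msf{t}$-coefficients of the factors $\mathrm{wt}_{b_i}$, and observe that the sign of each summand depends only on $|S|$ (and $w$). The only difference is that you explicitly carry out the parity computation showing the exponent of $-1$ equals $r+|P_-|$ modulo $2$, a verification the paper leaves to the reader.
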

\begin{proof}
We extract the coefficient of  $\msf{s}^r\msf{t}^{n-r}$ in $\mathrm{wt}_{b_1}\cdots \mathrm{wt}_{b_n}$.  
Let $S\subset \binom{[n]}{r}$. Define $\mathrm{wt}_S$ as
\begin{align*}
\prod_{\substack{i\in S\\ b_i\geq 1}}\left(-q\qint{b_i-1}\right)\prod_{\substack{i\in S\\ b_i\leq 0}}\left(q^{b_i}\qint{1-b_i}\right)\prod_{\substack{i\notin S\\ b_i\geq 1}}\qint{b_i}\prod_{\substack{i\notin  S\\ b_i\leq 0}}\left(-q^{b_i}\qint{-b_i}\right)
\end{align*}
Now, $\mathrm{wt}_S$ is the coefficient that appears as one scans $\mathrm{wt}_{b_1}\cdots \mathrm{wt}_{b_n}$ left to right and picks up the coefficient of $\msf{s}$ if $i\in S$, and that of $\msf{t}$ if $i\notin S.$
It follows from Proposition~\ref{prop:ne_en_factorization} that
\begin{align*}
c_{w,r}=
\sum_{S\in \binom{[n]}{r}} \mathrm{wt}_S,
\end{align*}
For an $S$ to contribute to this expression, we must necessarily have all $i$ for which $b_i=0$ belong to $S$, and all $i$ for which $b_i=1$ belong to $[n]\setminus S$. If these constraints are not satisfied, then $\mathrm{wt}_S=0$.

Assuming these constraints are met, the sign of $\mathrm{wt}_S$ only depends on  $|S|$ and $w$. Indeed, the exponent of $-1$ is the number of $i\in S$ with $b_i> 1$ plus the  number of $i\notin S$ with $b_i< 0$. We leave it to the reader to verify that this quantity has the same parity as  $|S|$ plus the number of $i$ with $b_i\leq 0$. The claim follows.
\end{proof}

\begin{remark}
If $\lambda \subseteq m\times m$, then it is seen that $b(\lambda)$ is a rearrangement of the area sequence $a(\lambda)$ introduced in Section~\ref{subsec:hit_remixed}. So the form of Theorem~\ref{th:zigzag_coefficient} simplifies in the square case.
\end{remark}

\section{The abelian case of the Stanley--Stembridge conjecture}
\label{sec:abelian}

We relate here the algebra $\pathalg$ to  chromatic symmetric functions, following Guay-Paquet~\cite{GP-notes}. To keep our exposition brief, we refer the reader to~\cite[Chapter 7]{St99} for any undefined notions pertaining to the ring  $\Qsym$ of quasisymmetric functions, and its distinguished subring $\sym$ of symmetric functions. Given a strong composition $\alpha$, we let $M_{\alpha}$ and $F_{\alpha}$ denote the corresponding \emph{monomial}  and  \emph{fundamental} quasisymmetric functions respectively. 

\subsection{Chromatic quasisymmetric functions}
Consider a graph $G=([n],E)$. A \emph{coloring $\kappa$} of $G$ is an attribution of a \emph{color} in $\bZ_+=\{1,2,\ldots\}$ to each vertex of $G$; it is \emph{proper} if $\kappa(i)\neq \kappa(j)$ whenever $\{i,j\}\in E$. An {\em ascent \textup{(}respectively descent\textup{)}} of a coloring $\kappa$ is an edge $\{i<j\}\in E$ such that  $\kappa(i)<\kappa(j)$ (respectively $\kappa(i)>\kappa(j)$).
Denote the number of ascents (respectively descents) by $\asc(\kappa)$ (respectively $\dsc(\kappa)$).

The \emph{chromatic quasisymmetric function} of $G$~\cite{ShWa16} is the generating function of proper colorings weighted by ascents:
\begin{align}
\label{eq:def Xg}
X_G(x,q)=\sum_{\kappa:V\to \bZ_+ \text{proper}}q^{\asc(\kappa)}x_{\kappa(1)}x_{\kappa(2)}\dots x_{\kappa(n)}.
\end{align}
It is clearly in $\Qsym$, homogeneous of degree $n$. The chromatic symmetric function is $X_G(x,1)$ and was originally defined by Stanley~\cite{St95}.

 Letting $\rho$ be the linear involution $\rho$  on $\Qsym$ defined by sending $M_{\alpha_1,\ldots,\alpha_k}$ to $ M_{\alpha_k,\ldots,\alpha_1}$,
one has
\[\sum_{\kappa:V\to \bZ_+ \text{proper}}q^{\dsc(\kappa)}x_{\kappa(1)}x_{\kappa(2)}\dots x_{\kappa(n)}=q^{|E|}X_G(x,q^{-1})=\rho(X_G).
\]
Since $\rho$ leaves $\sym$ stable, we can use descents or ascents indifferently in the definition of $X_G$ when it happens to be symmetric, which is precisely the case we will be interested in.

As mentioned in the introduction, a particular class of graphs of interest to us are \emph{Dyck graphs}. 

\begin{definition}
A simple graph $G=([n],E)$ is a Dyck graph if for any $\{i<j\}\in E$, then $\{i'<j'\}\in E$ for all $i\leq i'<j'\leq j$.
\end{definition} 

Dyck graphs arise as incomparability graphs of natural unit interval orders; we will have no need for this description. A Dyck path $D$ uniquely determines a Dyck graph; Given all the  ways to index Dyck paths, we inherit various ways to index Dyck graphs, which we will employ.

\begin{proposition}[\cite{ShWa16}]
For $G$ a Dyck graph, $X_G(x,q)$ is a symmetric function.
\end{proposition}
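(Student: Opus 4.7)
The standard approach, following Shareshian--Wachs, is to construct for each $i\geq 1$ an involution $\phi_i$ on the set of proper colorings of $G$ that swaps the multiplicities of the colors $i$ and $i+1$ while preserving the ascent statistic $\asc$. Such an involution shows that the action of the adjacent transposition $s_i$ of the variables $x_i$ and $x_{i+1}$ fixes $X_G$; since these transpositions generate the symmetric group acting on indices, this yields $X_G\in\sym$.

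To define $\phi_i$, fix a proper coloring $\kappa$ and set $V_i=\kappa^{-1}(\{i,i+1\})$. The induced subgraph $G[V_i]$ is bipartite with parts $\kappa^{-1}(i)$ and $\kappa^{-1}(i+1)$. The key structural lemma is that \emph{each connected component of $G[V_i]$ is a path whose vertices are listed in strictly increasing order of their labels, and no non-consecutive pair of vertices along the path is an edge of $G$}. The proof uses the Dyck hypothesis: given $u,v,w\in V_i$ with $\{u,v\},\{v,w\}\in E$, properness forces $\kappa(u)=\kappa(w)$, hence $\{u,w\}\notin E$; but if $v$ were the smallest or largest of the three, then one of $\{v,w\}$ or $\{u,v\}$ would be an edge whose interval $[\min,\max]$ contains both $u$ and $w$, and the Dyck property would force $\{u,w\}\in E$, a contradiction. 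Thus $v$ lies strictly between $u$ and $w$; monotonicity along longer paths follows by induction, and the same Dyck property rules out chords inside each path.

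Now define $\phi_i(\kappa)$ by swapping $i\leftrightarrow i+1$ on every connected component of $G[V_i]$ that has an odd number of vertices; components of even size are perfectly balanced between the two colors and are left alone. Since $V_i$ and the decomposition of $G[V_i]$ into components are preserved by any swap on $V_i$, the map is visibly an involution. A direct tally on each odd-size path shows that $\phi_i$ swaps the total multiplicities of $i$ and $i+1$ in $\kappa$ while fixing every other color count.

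For ascent preservation, consider a flipped odd-size path $v_0<v_1<\cdots<v_{2k}$: the alternating color pattern produces exactly $k$ ascents and $k$ descents on its $2k$ internal edges, and the flip merely interchanges ascents with descents, leaving the within-path ascent count equal to $k$. For any edge $\{v,w\}\in E$ with $v$ in a flipped component and $w\notin V_i$, the color $\kappa(w)$ is either strictly less than $i$ or strictly greater than $i+1$, so the inequality between $\kappa(v)$ and $\kappa(w)$ is insensitive to whether $\kappa(v)=i$ or $i+1$, and the ascent status of the edge is unchanged. Edges inside balanced components and edges with both endpoints outside $V_i$ are untouched. Hence $\asc(\phi_i(\kappa))=\asc(\kappa)$, and the symmetry of $X_G$ follows. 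The main obstacle is the structural lemma: once the components of $G[V_i]$ are recognized as monotone chord-free paths, the ensuing multiplicity and ascent bookkeeping is essentially automatic.
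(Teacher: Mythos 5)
The paper states this proposition only as a citation to Shareshian--Wachs and gives no proof of its own; your argument is a correct reconstruction of the involution proof of \cite[Theorem 4.5]{ShWa16} (a Bender--Knuth-type swap of the colors $i$ and $i+1$ on the odd components of the induced bipartite subgraph, checked to preserve $\asc$), so it matches the intended source. The only point worth spelling out is that your betweenness claim for $u,v,w$ is also what forces every vertex of $G[V_i]$ to have degree at most $2$ and rules out cycles (the maximal-label vertex of a cycle would have two smaller neighbours), which is needed before you may speak of the components as monotone paths.
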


\subsection{Guay-Paquet's rectangular formula}
\label{sub:guay-paquet-rectangular}

Let $G$ be a Dyck graph on $[n]$ corresponding to Dyck path $D$.
Let $I=\{i-a+1,\dots,i\}$, $J=\{j,j+1,\dots,j+{b-1}\}$ with $i<j$ be subsets of $[n]$ such that $(i-a+1,j-1)\in E$ and  $(i+1,j+b-1)\in E$. 
This forms an ``abelian rectangle'' $[i-a+1,i]\times [j,j+b-1]$. In terms of paths, this abelian rectangle corresponds to a certain ``abelian'' subpath of $D$ with $a$ north steps and $b$ east steps.

Figure~\ref{fig:abelian_subpath} depicts a Dyck path $D$. The labeled squares along the diagonal give the vertex set of the associated Dyck graph. Edges are given by squares below the path and above the diagonal squares. In this example, we have $I=\{2,3,4\}$ and $J=\{7,8\}$, and the resulting abelian rectangle $I\times J$ in gray. The subpath of $D$ in this shaded region gives the abelian subpath.

\begin{figure}
\includegraphics[scale=0.6]{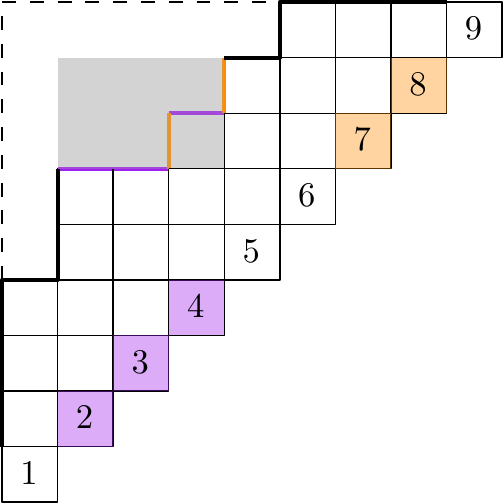}
\caption{A Dyck path with an abelian subpath in bold and the abelian rectangle highlighted.}
\label{fig:abelian_subpath}
\end{figure}
 
The modular law~\cite{GP-modular} says that if a subpath $\msf{ene}$ or $\msf{nen}$ is part of an abelian subpath, then
\begin{align}
\label{eq:XG_modular}
(1+q)X_{U\msf{ene}V}=qX_{U\msf{een}V}+X_{U\msf{nee}V},\\
(1+q)X_{U\msf{nen}V}=qX_{U\msf{enn}V}+X_{U\msf{nne}V}.
\end{align}

Fix $U$ and $W$, and consider the set of all Dyck paths $UvW$ where $v$ is the abelian subpath. Let us assume that $v$ has $a$ north steps and $b$ east steps, so that the abelian rectangle has dimensions $a\times b$.
We denote the $\bC(q)$-linear span of the chromatic symmetric functions $X_{UvW}$ by $\mc{X}_{a,b}$.

Consider the map on the $\bC(q)$-linear span of words with $a$ $\msf{n}$'s and $b$ $\msf{e}$'s, with image in $\mc{X}_{a,b}$, defined by
\begin{align}
v\mapsto X_{UvW}
\end{align}
and extended linearly. Comparing the relations \eqref{eq:mod_relations_path_algebra},\eqref{eq:mod_relations_path_algebra_1} of $\pathalg$ and the modular laws \eqref{eq:XG_modular}, we have in fact a map defined on $\pathalg_{a,b}$. We can thus apply this map to the relation in Theorem~\ref{th:GP}, and this gives precisely Theorem~\ref{th:intro_1}.

\subsection{The Stanley--Stembridge conjecture}
\label{sub:abelian_compendium}

The Stanley--Stembridge conjecture~\cite{StanleyStembridge,St95} asserts that, if $G$ is a Dyck graph then the chromatic symmetric function ${X_G}_{|_{q=1}}$ has a positive expansion in the $e_\lambda$ basis. Shareshian--Wachs~\cite{ShWa16} then extended it by conjecturing that the $e$-expansion of $X_G$ had coefficients that are polynomials in $q$ with nonnegative coefficients. Writing
\begin{equation}
\label{eq:expansion_elambda}
X_G=\sum_\lambda c_\lambda^G e_\lambda,
\end{equation}
the conjecture posits:

\begin{conjecture}
\label{conj:stanley_stembridge}
For any Dyck graph $G$ and any partition $\lambda$, the coefficient $c_\lambda^{G}$ is in $\bN[q]$.
\end{conjecture}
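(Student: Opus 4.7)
The plan is to extend the algebraic reduction underlying Theorems~\ref{th:intro_1} and \ref{th:GP} into a uniform machine that handles arbitrary Dyck paths, not merely abelian subpaths. The strategy has three stages: first, enlarge $\pathalg$ to a ``Dyck path algebra'' $\mathcal{D}$ whose defining relations are all universally valid linear identities among the $X_D$; second, choose a distinguished basis of $\mathcal{D}$ indexed by Dyck paths whose $e$-expansions are \emph{a priori} $\bN[q]$-positive; and third, verify that the change-of-basis coefficients themselves lie in $\bN[q]$, so that $e$-positivity propagates from the base cases to every $X_D$.

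For stage one, the modular relations \eqref{eq:mod_relations_path_algebra} and \eqref{eq:mod_relations_path_algebra_1} act only inside abelian subpaths. I would search for additional local linear relations generated by moves where the underlying Dyck graph forces an $X_G$-identity, for instance the three-term splitting identities implicit in the works of Guay-Paquet, Abreu--Nigro, and Harada--Precup, and encode them in a graded quotient of the free algebra on $\{\msf{n},\msf{e}\}$. Verifying that the resulting algebra is still down-up-like with a PBW basis would generalize the framework of Section~\ref{sec:pathalg} beyond the abelian regime. For stage two, the rectangular (and more generally staircase) Dyck paths are the natural candidates. For stage three, the model is Theorem~\ref{th:conj_cmp}: the rewriting procedure driven by \eqref{eq:relation_eu} and \eqref{eq:relation_nu} yields coefficients with globally controlled signs because the invariant $m_w$ cleanly tracks how many sign-flipping moves $w\to w_H$ occur along each reduction. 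One would want an analogous invariant to govern rewriting in $\mathcal{D}$, so that after summing over all reduction paths to a given base case the signed contributions aggregate into an $\bN[q]$ total.

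The main obstacle \textemdash{} and the reason Conjecture~\ref{conj:stanley_stembridge} remains open after thirty years \textemdash{} is precisely stage one. It is not known whether the full linear span of relations among the $X_G$ for Dyck graphs is finitely generated by relations as local as the modular law, nor whether any such enlargement admits a PBW-type basis on which stage two could be hung. Even granting such an algebra, stage three is delicate: the $q$-hit numerators in Theorem~\ref{th:GP} are in $\bN[q]$, but they are divided by $\qint{m}\qint{m-1}\cdots\qint{m-n+1}$, and it is only because the rectangular base cases are themselves $e$-positive with compatible denominators that positivity survives in the abelian setting. Engineering the same cancellation in the non-abelian setting is the crux.

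As a complementary route I would pursue a geometric realization via regular semisimple Hessenberg varieties, whose cohomology has long been linked to $X_G$: the idea is to realize the reduction inside $\mathcal{D}$ as a module operation on the equivariant cohomology of a flag or permutahedral variety, in direct analogy with the appearance of $\kly$ in Section~\ref{subsec:qkly}. This would convert $e$-positivity into the statement that an explicit graded $S_n$-module is a genuine (rather than virtual) permutation module on cosets of Young subgroups, with $q$ recording the cohomological grading. Bridging the algebraic rewriting picture of $\pathalg$ with this geometric picture is where I expect the decisive insight to be required.
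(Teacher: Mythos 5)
There is a fundamental mismatch here: the statement you are addressing is Conjecture~\ref{conj:stanley_stembridge}, the ($q$-refined) Stanley--Stembridge conjecture, which the paper does not prove and explicitly describes as wide open. The paper only establishes the \emph{abelian} case machinery (Theorems~\ref{th:intro_1} and~\ref{th:GP}, plus the survey of the Harada--Precup and Abreu--Nigro formulas), and proves an unrelated positivity statement about staircase-basis coefficients (Theorem~\ref{th:conj_cmp}). So there is no ``paper's own proof'' to compare against, and your submission is not a proof either: it is a research program in which every one of the three stages is left unexecuted. You say this yourself when you write that the conjecture ``remains open'' and that stage one is ``the main obstacle.'' A proof proposal that concedes its own central step is unknown cannot be evaluated as correct.

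To be concrete about where the gaps are: stage one requires exhibiting a complete (or at least sufficient) set of local linear relations among the $X_D$ beyond the modular law \eqref{eq:mod_relations_path_algebra}--\eqref{eq:mod_relations_path_algebra_1}, together with a PBW-type basis for the resulting quotient algebra; no such relations or basis are produced, and it is not even argued that they exist. Stage two presupposes that the chosen base cases (rectangular or staircase Dyck paths) are $e$-positive, which for general staircase paths is itself a nontrivial instance of the conjecture rather than a known input. Stage three asks for a sign-controlling invariant analogous to $m_w$ in Theorem~\ref{th:conj_cmp}, but that theorem's coefficients are \emph{signed}, so even a perfect analogue would not by itself yield positivity of the $e$-coefficients --- you would still need the cancellation phenomenon you correctly flag as ``the crux.'' Your diagnosis of the difficulties is accurate and shows good understanding of the paper's framework, but as a proof of Conjecture~\ref{conj:stanley_stembridge} the submission contains no new mathematical content that closes any of these gaps.
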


Recall that an \emph{acyclic orientation} $A$ of a graph $G$ is an orientation of its edges such that the resulting directed graph has no directed cycles. Assuming $V(G)=[n]$, an \emph{ascent} of $A$ is an edge $i\to j$ with $1\leq i<j\leq n$. If $G$ is nonempty, then $A$ has at least one \emph{source}, i.e. a vertex with no incoming edge. 

The following theorem was proved in~\cite[Theorem 5.3]{ShWa16}, the case $q=1$ being already known to Stanley~\cite[Theorem 3.3]{St95}.

\begin{theorem}
\label{th:sum_clambda}
For any Dyck graph $G$ and any $k\geq 1$, the sum of $c^G_\lambda$ over all partitions with $k$ parts is the number of acyclic orientations of $G$ with $k$ sources, counted with weight $q^{\#\text{ ascents of }A}$.
\end{theorem}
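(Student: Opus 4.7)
The plan is to match two expansions of $X_G$: its $e$-expansion $\sum_\lambda c_\lambda^G e_\lambda$ and a reformulation of $X_G$ as a generating function over acyclic orientations. By applying a suitable specialization to both, the source statistic on the orientation side should be paired with the length statistic on the $e$-side.

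First I would rewrite $X_G$ via orientations. For each proper coloring $\kappa$, let $A(\kappa)$ be the orientation obtained by directing each edge $\{i<j\}\in E$ as $i\to j$ exactly when $\kappa(i)<\kappa(j)$; this is acyclic, and the ascents of $\kappa$ coincide with the ascents of $A(\kappa)$. Grouping proper colorings by their induced orientation yields
\[
X_G(x,q) \;=\; \sum_{A\in\Acy(G)} q^{\asc(A)}\,\Omega_A(x),
\]
where $\Omega_A(x)=\sum_{\kappa\,:\,A(\kappa)=A} x^\kappa$ is the strict $P$-partition generating function of the poset defined by $A$ (quasisymmetric in general, but the full sum is symmetric when $G$ is a Dyck graph).

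Next, to extract $\sum_{\ell(\lambda)=k} c_\lambda^G$, I would apply the stable principal specialization $x_i \mapsto q^{i-1}$ for $i\leq N$ and $x_i\mapsto 0$ otherwise to both expansions. On the $e$-side, $e_\lambda$ specializes to an explicit product of $q$-binomial coefficients, and after clearing by an appropriate $q$-factorial one obtains a polynomial in $q^N$ whose structure separates partitions by length $\ell(\lambda)$. On the orientation side, $\Omega_A$ specializes to a $q$-analogue of the strict order polynomial of the poset $A$, whose behaviour under chromatic reciprocity $N\mapsto -N$ is governed by the minima of $A$, i.e.\ exactly its sources. Matching the two expressions and isolating the appropriate coefficient yields the claimed identity.

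The main obstacle is the precise bookkeeping in the final step: one must verify that the "length $k$" stratum on the $e$-side matches the "$k$ sources" stratum on the orientation side while tracking $q^{\asc}$ throughout. For $q=1$ this is Stanley's original argument via chromatic reciprocity $(-1)^n\chi_G(-m)=\#\{(A,\kappa): A\in\Acy(G),\ \kappa\text{ weakly $A$-compatible}\}$, together with the observation that each source of $A$ contributes an extra factor of $m$. For general $q$, Shareshian--Wachs establish the $q$-analogue of this reciprocity, and the polynomial identity they extract is precisely Theorem~\ref{th:sum_clambda}; my proof would follow that template, with all of the technical work concentrated in tracking $\asc(\kappa)$ through the specialization.
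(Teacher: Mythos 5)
First, a point of reference: the paper does not prove this statement at all. It is quoted verbatim from Shareshian--Wachs \cite[Theorem 5.3]{ShWa16} (with the $q=1$ case attributed to Stanley \cite[Theorem 3.3]{St95}), so there is no internal proof to compare yours against; your proposal has to be judged on its own. Your opening step is fine: grouping proper colorings by the induced acyclic orientation $A(\kappa)$ (orienting each edge toward the larger color) does give $X_G=\sum_A q^{\asc(A)}\Omega_A$ with $\asc(\kappa)=\asc(A(\kappa))$, and this is indeed the starting point of the known proofs.

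The genuine gap is in the extraction step, and it is not ``bookkeeping.'' The principal specialization (whether $x_i\mapsto 1$ or $x_i\mapsto z^{i-1}$ --- note you cannot reuse $q$, which is already the ascent variable) sends $e_\lambda$ to a product such as $\prod_i\binom{N}{\lambda_i}$, which vanishes at $N=0$ to order exactly $\ell(\lambda)$ but whose higher Taylor coefficients depend on the individual parts of $\lambda$, not just on $\ell(\lambda)$. Consequently $[N^k]\chi_G(N)$ equals a sum of $c^G_\lambda$ over \emph{all} $\lambda$ with $\ell(\lambda)\le k$, weighted by part-dependent constants; only the $k=1$ stratum is cleanly isolated (this is the Greene--Zaslavsky linear-coefficient statement you allude to). Since for $n\ge 7$ there are more partitions of $n$ than the dimension of the space of polynomials of degree $\le n$ in $N$, the functional $f\mapsto\sum_{\ell(\lambda)=k}c_\lambda$ simply does not factor through the principal specialization, so no amount of reciprocity or source-counting on the orientation side can close the argument for $k\ge 2$. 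The proof in \cite{ShWa16} (and Stanley's at $q=1$) instead works at the level of the fundamental quasisymmetric expansion: one first proves a lemma identifying $\sum_{\ell(\lambda)=k}c_\lambda$ with a single coefficient in the $F$-expansion of $X_G$ (or of $\omega X_G$), namely the one indexed by the hook composition $(n-k+1,1^{k-1})$, using the triangularity of the $e_\lambda$ in the $F$-basis; one then matches that coefficient, via the known $F$-expansion of $X_G$ over acyclic orientations, with the orientations having $k$ sources weighted by $q^{\asc}$. That lemma is the missing idea in your proposal, and without it the ``final bookkeeping'' you defer cannot be carried out.
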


In particular the sum over all $\lambda$ of $c^G_\lambda$ is enumerated by the acyclic orientations of $G$. 

\subsection{The abelian case}

One case has been particularly studied and proved in different ways, called the \emph{abelian case}. 
In the language of the Section~\ref{sub:guay-paquet-rectangular}, this is when the Dyck graph $G$ on $n$ vertices has an associated abelian rectangle of maximal size $a\times b$ with $a+b=n$. In terms of Dyck paths, it means that the number of initial $\msf{n}$'s plus the number of final $\msf{e}$'s is $\geq n$; equivalently, the associated shape $\lambda=\lambda(G)$ satisfies $\lambda_1+\ell(\lambda)\leq n$. \smallskip 
 
We will now record and comment on two known $e$-expansions of $X_G$ when $G$ is abelian.

The \emph{source sequence} $\sourceseq(A)=(m_1,\ldots,m_k)$ of $A$ is defined recursively as follows: if $S_1$ is the set of sources of $A$, then $m_1=|S_1|$ and $(m_2,\ldots,m_k)$ is the source sequence of the acyclic orientation obtained by restricting $A$ to $G\setminus S_1$. 

Let $G$ be an abelian Dyck graph with $\lambda=\lambda(G)$. Let $(a_1,\ldots,a_n)$ be its ascent sequence. We also assume $\lambda_1\geq \ell=\ell(\lambda)$ without loss of generality. Since the vertices of $G$ can be partitioned in two cliques, acyclic orientations can have at most two sources. The expansion of $X_G$ thus only involves partitions with at most two parts.

\subsubsection{The formulas of Stanley and Harada and Precup}
\label{subsub:SHP}

Harada and Precup~\cite[Theorem 1.1]{HaradaPrecup} gave a proof of \ref{conj:stanley_stembridge}. They used the celebrated work of Brosnan and Chow~\cite{BC18} that showed the connection of $X_G$ for any Dyck graph $G$ with the study of Hessenberg varieties. Their result can be readily formulated as follows:
\begin{equation}
\label{eq:abelian_harada_precup}
X_G=|\acy^q_1(G)|\,e_{n}+\sum_{\{i<j\}\notin E}q^{a_i+a_j}X_{G\setminus\{i,j\}}^{+(1,1)},
\end{equation}
where $\acy^q_1(G)$ is the set of acyclic orientations of $G$ with one source, counted according to ascents; and  for a symmetric function $f=\sum c_\mu e_\mu$, then $f^{+(a,b,\dots)}\coloneqq\sum_{\mu}c_\mu e_{\mu_1+a,\mu_2+b,\dots}$. 
 
  Now by iterating the previous equation one obtains easily:
\begin{equation}
\label{eq:abelian_qstanley}
 X_G=\sum_A q^{\#\text{ ascents of }A}\,e_{n-\initial(A),\initial(A)},
\end{equation} 
where $\initial(A)$ is the length of the run of $2$'s at the beginning of $\sourceseq(A)$. The case $q=1$ is due to Stanley in his original paper~\cite[Theorem 3.4 and Corollary 3.6]{St95}. In fact, Stanley's proof can be extended to include $q$ and thus prove~\eqref{eq:abelian_qstanley}, which thus gives an independent proof of the result of Harada and Precup. 
\subsubsection{The formula of Abreu and Nigro}
A second proof was given by Abreu and Nigro~\cite[Theorem 1.3]{AbreuNigro}. Their result can be stated as follows:
\begin{equation}
\label{eq:abreu_nigro}
X_G=\sum_{j=0}^{\ell}q^j\qfact{j}\qint{n-2j}H^{n-j-1}_{j}(\lambda)\,e_{n-j,j}.
\end{equation}
Note that we slightly simplified their formula: the coefficient of $e_{n-\ell,\ell}$ in~\eqref{eq:abreu_nigro} is given in~\cite{AbreuNigro} as  $\qfact{\ell}H^{n-\ell}_{\ell}(\lambda)$.  

Let us explain why they coincide, which after simplifying by $\qfact{\ell}$ reduces to the identity
\begin{equation}
\label{eq:simple}
H^{n-\ell}_{\ell}(\lambda)=q^{\ell}\qint{n-2\ell} H^{n-\ell-1}_{\ell}(\lambda).
\end{equation}

\begin{proof}[Sketch of the proof of~\eqref{eq:simple}]
Write $N=n-\ell$. Fix a maximal rook configuration $C$ in $R(N-1,N-1,\lambda,\ell)$. Note that since $\ell=\ell(\lambda)$, all rooks in the top $\ell$ rows are necessarily inside $\lambda$, say in columns $J=\{j_1,\ldots,j_\ell\}$. One can extend $C$ to a configuration $C'$ in $R(N,N,\lambda,\ell)$ by inserting a rook in the bottom row in one of the $N-\ell$ columns $[N-1]\setminus J\cup\{N\}$. Tracking the new unattacked cells gives us the coefficient $q^{\ell}\qint{N-\ell}$: there are $\ell$ new unattacked cells in the top $\ell$ positions of the last column of $C'$, while $\qint{N-\ell}$ comes from the inversions created by the insertion in the last row.
\end{proof} 

\subsubsection{Comparison} It is certainly interesting to connect directly  \eqref{eq:abreu_nigro} to \eqref{eq:abelian_harada_precup},\eqref{eq:abelian_qstanley}. More precisely, equating  the two implies the following result

\begin{proposition}
Let $G$ be an abelian Dyck graph. Then the number of acyclic orientations $A$ with $\initial(A)=j$, with weight $q^{\#\text{ ascents of }A}$ is given by $q^j\qfact{j}\qint{n-2j}H^{n-j-1}_{j}(\lambda)$.
\end{proposition}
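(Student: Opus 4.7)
The plan is straightforward: equate coefficients in the two $e$-basis expansions of $X_G$ recorded just above, namely the Abreu--Nigro formula~\eqref{eq:abreu_nigro} and the Stanley formula~\eqref{eq:abelian_qstanley} (the latter obtained by iterating the Harada--Precup identity~\eqref{eq:abelian_harada_precup}). Since $\{e_\mu\}_{\mu\vdash n}$ is a linear basis of the degree-$n$ component of $\sym$, any two expressions for $X_G$ in this basis must agree termwise.

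First I would do a compatibility check to make sure the two formulas live in the span of the same basis vectors $e_{n-j,j}$. On the Abreu--Nigro side, $j$ ranges over $0\le j\le \ell$, and the abelian hypothesis $\lambda_1+\ell\le n$ together with $\lambda_1\ge \ell$ (assumed without loss of generality) gives $2\ell\le n$, so each $(n-j,j)$ really is a partition of $n$. On the Stanley side, as recalled in Section~\ref{subsub:SHP}, the vertex set of an abelian Dyck graph is a union of two cliques, so every acyclic orientation has at most two sources; consequently $\initial(A)\in\{0,1,\dots,\lfloor n/2\rfloor\}$ and the source sequence forces each orientation to contribute to some $e_{n-j,j}$.

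Then I would simply read off the coefficient of $e_{n-j,j}$ in each formula. From~\eqref{eq:abreu_nigro} one gets
\[
[e_{n-j,j}]\,X_G = q^{j}\qfact{j}\qint{n-2j}\,H^{n-j-1}_{j}(\lambda),
\]
while from~\eqref{eq:abelian_qstanley} one gets
\[
[e_{n-j,j}]\,X_G = \sum_{A\,:\,\initial(A)=j} q^{\#\text{ ascents of }A},
\]
which is precisely the weighted count of acyclic orientations with $\initial(A)=j$ appearing in the statement. Linear independence of the $e_\mu$ concludes the proof.

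There is no real obstacle; the substance of the proposition lies entirely in the two ingredient formulas, both of which are already in the literature (and, in the case of~\eqref{eq:abelian_qstanley}, discussed and derived from~\eqref{eq:abelian_harada_precup} in the paragraph above). The only point deserving care is the slight simplification of Abreu--Nigro's original $j=\ell$ term, which was already handled in the text via identity~\eqref{eq:simple}; once that is absorbed, the proof is a one-line matching of coefficients.
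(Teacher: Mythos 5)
Your proposal is correct and matches the paper's own derivation: the proposition is stated there precisely as the consequence of equating the coefficient of $e_{n-j,j}$ in the Abreu--Nigro expansion~\eqref{eq:abreu_nigro} with that in the iterated Harada--Precup/Stanley expansion~\eqref{eq:abelian_qstanley}. The paper additionally sketches a direct bijective proof at $q=1$ via rook placements, but that is offered as supplementary insight rather than as the proof of the statement.
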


Let us sketch a direct bijective proof for $q=1$: Let $A$ be an acyclic orientation with $\initial(A)=j$. Let $S=(\{u_1<v_1\},\{u_2<v_2\},\ldots,\{u_j<v_j\})$ be the first $j$ sets in the source sequence decomposition of $A$. Denote by $V$ the set containing these $2j$ vertices. 
The orientation $A$ is then entirely characterized by $S$ together with an acyclic orientation $A_1$ of $G\setminus V$ that has a unique source by the definition of $\initial(A)$. Recall that the cells of $\lambda=\lambda(G)$ are in bijection with the non-edges of $G$. From this it follows that the vertices of $V$ can be represented by $j$ non-attacking roots in the shape $\lambda$, and they can be ordered in $j!$ ways.  Let $\lambda'\subset (n-2j)\times(n-2j)$ be the shape corresponding to $G\setminus V$: it is obtained by removing the columns and rows occupied by the rooks in $\lambda$. 
Now the number of acyclic orientations of $G\setminus V$ with a unique source  is given by $(n-2j)$ times the number $H^{n-2j-1}_0(\lambda')$, and this can be proved bijectively~\cite[\S 9.1]{Alepan}. 

Putting things together, we get a $1$-to-$j!(n-2j)$ map between acyclic orientations of $G$ with $\initial(A)=j$, and pairs of rook placements in $R(j,\ell,\lambda,j)\times R(n-2\ell-1,n-2\ell-1,\lambda',0)$ with $\lambda'$ as above. These two rook placements can be naturally combined to give a rook placement in $R(n-\ell-1,n-\ell-1,\lambda,j)$, which completes the bijective proof.

\section*{Acknowledgements}
We are extremely grateful to Mathieu Guay-Paquet for generously sharing his unpublished work. Thanks also to Ira Gessel for helpful correspondence in the context of $q$-hit numbers. Finally we would like to thank Laura Colmenarejo, Alejandro Morales, and Greta Panova for sharing an early version of their article.

\bibliographystyle{hplain}
\bibliography{Biblio_chromatic}

\end{document}